\DeclareMathOperator*{\argmin}{arg\,min}
\DeclareMathOperator*{\argmax}{arg\,max}
\numberwithin{equation}{section}
\title[Consistency of ELBO maximization]{Consistency of ELBO maximization for model selection}
  \author{\Name{Badr-Eddine Ch\'erief-Abdellatif} \Email{badr.eddine.cherief.abdellatif@ensae.fr} \\ 
   \addr CREST, ENSAE, Universit\'e Paris Saclay}
\begin{document}

\maketitle

\begin{abstract}
The Evidence Lower Bound (ELBO) is a quantity that plays a key role in variational inference. It can also be used as a criterion in model selection. However, though extremely popular in practice in the variational Bayes community, there has never been a general theoretic justification for selecting based on the ELBO. In this paper, we show that the ELBO maximization strategy has strong theoretical guarantees, and is robust to model misspecification while most works rely on the assumption that one model is correctly specified. We illustrate our theoretical results by an application to the selection of the number of principal components in probabilistic PCA.
\end{abstract}
\begin{keywords}
Variational inference, Evidence lower bound, Model selection.
\end{keywords}

\section{Introduction}
\label{sec:intro}

Approximate Bayesian inference is at the core of modern Bayesian statistics and machine learning. While exact Bayesian inference is often intractable, variational inference has proved to provide an efficient solution when dealing with large datasets and complex probabilistic models. Variational Bayes (VB) aims at maximizing a numerical quantity referred to as Evidence Lower Bound on the marginal likelihood (ELBO), and thus makes use of optimization techniques to converge faster than Monte Carlo sampling approach. ~\cite{blei2017variational} provides a comprehensive survey on variational inference. Although VB is mainly used for its practical efficiency, little attention has been put towards its theoretical properties during the last years. While ~\cite{alquier2016properties} studied the properties of variational approximations of Gibbs distributions used in machine learning for bounded loss functions, \cite{Tempered,Chicago,wang2018frequentist,Plage,cherief2018consistency} extended the results to more general statistical models. 

At the same time, model selection remains a major problem of interest in statistics that naturally arises in the course of scientific inquiry. The statistician aims at selecting a model among several candidates given an observed dataset. To do so, one can perform cross validation as in \cite{CVVehtari} or maximize a numerical criterion to make the final choice, see the review of \cite{ModelSelectionReview}. In the literature, penalized criteria such as AIC and BIC respectively introduced by \cite{AIC} and \cite{BIC} are popular. While AIC aims at optimizing the prediction performance, BIC is more suitable for recovering with high probability the true model (when such a model exists), see \cite{yang2005can}. Thus, it is necessary to define a criterion suited to a given objective. Meanwhile, a non-asymptotic theory of penalization using oracle inequalities has been developed during the last two decades, and offers a simple way to assess the quality of a given model selection criterion. We refer the interested reader to \cite{MR2319879} for more details.

In this paper, we are interested in finding an estimate of the distribution of the data, and we need to
choose from among competing models. ~\cite{blei2017variational} states that "the [evidence lower] bound is a good approximation of the marginal likelihood, which provides a basis for selecting a model. Though this sometimes works in practice, selecting based on a bound is not justified in theory". Since then, authors of~\cite{cherief2018consistency} have provided an analysis of model selection based on the ELBO in the case of mixture models. We extend their result to the general case of independent and identically distributed (i.i.d.) data, and we provide an oracle inequality on the ELBO criterion that justifies the consistency of ELBO maximization when the objective is the estimation of the distribution of the data. In particular, as soon as there exists a true model, we show that the ELBO criterion is adaptive and that the selected estimator achieves the same convergence rate than the variational approximation associated with the true model.

The rest of this paper is organized as follows. Section \ref{sec:notations} introduces the setting and the key concepts needed to understand our results. In Section \ref{sec:consistency}, we prove that the ELBO criterion provides a variational approximation that is consistent with the sample size as soon as there exists a true model. We also extend the result to misspecified models. We finally illustrate the main theorem of this paper by an application to the selection of the number of principal components in probabilistic Principal Component Analysis (PCA) in Section \ref{sec:example}. All the proofs are deferred to the appendix.

\section{Framework}
\label{sec:notations}

Let us introduce the notations and the framework we adopt in this paper. We consider a collection of i.i.d. random variables $X_1$,...,$X_n$ distributed according to some probability distribution $P^0$ in a measurable space $\big( \mathbb{X},\mathcal{X} \big)$. We denote $X_1^n=(X_1,...,X_n)$. We consider a countable collection $ \left\lbrace \mathcal{M}_K /  K \geq 1 \right\rbrace$  of statistical mixture models $\mathcal{M}_K = \lbrace P_{\theta_K} \hspace{0.1cm} / \hspace{0.1cm} \theta_{K} \in \Theta_K \rbrace$ where $\Theta_K$ is the parameter set associated with index $K$. We make no assumptions on $\Theta_K$'s nor on $P_{\theta_K}$. Parameter spaces may overlap or have inclusion relationships. Let $\mathcal{M}_{1}^{+}(\Theta_{K})$ be the set of all probability distributions over $\Theta_{K}$.

We use a Bayesian approach, and we define a prior $\pi$ over the full parameter space $\cup_{K \geq 1} \Theta_K$ (equipped with some suited sigma-algebra). First, we specify a prior weight $\pi_K$ assigned to model $\mathcal{M}_K$, and then a conditional prior $\Pi_K(.)$ on $\theta_K \in \Theta_K$ given model $\mathcal{M}_K$: 
$$ \pi = \sum_{K \geq 1} \pi_K \Pi_K. $$

The Kullback-Leibler divergence between two probability distributions $P$ and $R$ is
$$
\textrm{KL}(P,R) = \begin{cases}
\int \log\left( \frac{dP}{dR} \right) dP \hspace{0.2cm} \text{if $R$ dominates $P$}, \\
+ \infty \hspace{0.2cm} \text{otherwise.}
\end{cases}
$$
For any $\alpha \ne 1$, authors of \cite{van2014renyi} detail the properties of the $\alpha$-Renyi divergence between two probability distributions $P$ and $R$
which is equal to:
$$
D_\alpha(P,R) = \begin{cases}
\frac{1}{\alpha-1} \log \int \left( \frac{dP}{dR} \right) ^{\alpha-1} dP \hspace{0.2cm} \text{if $R$ dominates $P$}, \\
+ \infty \hspace{0.2cm} \text{otherwise.}
\end{cases}
$$

We define the tempered posterior distribution $\pi_{n,\alpha}^K(.|X_1^n)$ on parameter $\theta_K \in \Theta_K$ given model $\mathcal{M}_K$ using prior $\Pi_K$ and likelihood $L_n$ for any $\alpha \in (0,1)$:
\[
\pi_{n,\alpha}^K(d\theta_K|X_1^n) \propto L_{n}(\theta_K)^\alpha \Pi_K(d\theta_K).
\]
This definition is a slight variant of the regular Bayesian posterior (for which $\alpha=1$), and is also referred to as Bayesian fractional posterior in \cite{bhattacharya2016bayesian}. This posterior is easier to sample from, more robust to model misspecification and requires less stringent conditions to obtain consistency, see respectively \cite{behrens2012tuning}, \cite{grunwaldmisspecifiation} and \cite{bhattacharya2016bayesian}.

The Variational Bayes approximation $\tilde{\pi}_{n,\alpha}^K(.|X_1^n)$ of the tempered posterior associated with model $\mathcal{M}_K$ is then defined as the projection, with respect to the Kullback-Leibler divergence, of the tempered posterior onto some set $\mathcal{F}_K$:
$$
\tilde{\pi}_{n,\alpha}^K(.|X_1^n) = \argmin_{\rho_K \in \mathcal{F}_K}  \textnormal{KL}(\rho_K,\pi_{n,\alpha}^K(.|X_1^n)).
$$

The choice of the variational set $\mathcal{F}_K$ is crucial: the variational approximation must be close enough to the target distribution (as an approximation of the tempered posterior) but not too close (in order to be tractable). A classical variational set $\mathcal{F}_K$ is the parametric family which leads to a tractable parametric approximation, e.g. a Gaussian distribution. Another popular set $\mathcal{F}_K$ in the VB community is the mean-field approximation that is based on a partition of the space of parameters, and which consists in a factorization of the variational approximation over the partition.

Alternatively, the variational approximation is often defined as the distribution into $\mathcal{F}_K$ that maximizes the Evidence Lower Bound:
\[
\tilde{\pi}_{n,\alpha}^K(.|X_1^n) = \argmax_{\rho_K \in \mathcal{F}_K} \bigg\{ \alpha \int \ell_n(\theta_K) \rho_K(d\theta_K) - \textrm{KL}\big(\rho_K,\Pi_K\big) \bigg\}
\]
where the function inside the argmax operator is the ELBO (as a function of $K$ and $\rho_K$) and $\ell_n$ is the log-likelihood. In the following, we will just call $\textrm{ELBO}(K)$ the closest approximation to the log-evidence, i.e. the value of the ELBO evaluated at its maximum:
$$
\textrm{ELBO}(K)=\alpha \int \ell_n({\theta}_K) \tilde{\pi}_{n,\alpha}^K(d\theta_K|X_1^n)-\textrm{KL}(\tilde{\pi}_{n,\alpha}^K(.|X_1^n),\Pi_K).
$$

In the variational Bayes community, researchers and practitioners use the ELBO in order to select the model from which they will consider the final variational approximation $\tilde{\pi}_{n,\alpha}^{\hat{K}}(.|X_1^n)$, as stated in \cite{blei2017variational}. We propose to consider a penalized version of the ELBO criterion
$$
\hat{K} = \argmax_{K\geq 1} \bigg\{ \textrm{ELBO}(K) - {\log\bigg(\frac{1}{\pi_K}\bigg)} \bigg\}
$$
which is a slight variant of the classical definition, although choosing a uniform prior over a finite number of models leads to maximizing the ELBO. Note that the penalty term is not just an artefact in order to ease the theoretical proof, but it is a complexity term that reflects our prior beliefs over the different models.

We will provide in the next section a theoretical justification to such a selection criterion and show that the selected variational estimator $\tilde{\pi}_{n,\alpha}^{\hat{K}}(.|X_1^n)$ is consistent under mild conditions as soon as there exists a true model. We will adopt the definition of \textit{consistency} used in \cite{Tempered} and \cite{cherief2018consistency} that is, the Bayesian estimator is said to be consistent if, in expectation (with respect to the random variables distributed according to $P^0$), the average Renyi loss between a distribution in the selected model and the true distribution (over the Bayesian estimator) goes to zero as $n\rightarrow +\infty$:
$$
\mathbb{E} \bigg[ \int D_{\alpha}( P_{\theta}, P^0 ) \tilde{\pi}_{n,\alpha}^{\hat{K}}(d\theta|X_1^n) \bigg] \xrightarrow[n\rightarrow +\infty]{} 0.
$$
This definition is closely related to the notion of \textit{concentration} which is defined in \cite{ghosal2000convergence} as the asymptotic concentration of the Bayesian estimator around the true distribution, and which is usually used to assess frequentist guarantees for Bayesian estimators. It is sometimes also referred to as \textit{contraction} (or even \textit{consistency}). See Appendix \ref{apd:connection} for more details on the connection between the notions of \textit{consistency} and \textit{concentration}.

\section{Consistency of the ELBO criterion}
\label{sec:consistency}

In this section, unless explicitly stated otherwise, we assume that there exists a true model $\mathcal{M}_{K_0}$ that contains the true distribution $P^0$, i.e. that there exists $K_0$ and $\theta^0 \in \Theta_{K_0}$ such that $P^0=P_{\theta^0}$. 

A key assumption introduced in \cite{ghosal2000convergence} in order to obtain the concentration of the regular posterior distribution ${\pi}_{n,1}^{{K}_0}(.|X_1^n)$ associated with the true model $\mathcal{M}_{K_0}$ is a \textit{prior mass condition} which states that the prior $\Pi_{K_0}$ must give enough mass to some neighborhood (in the Kullback-Leibler sense) of the true parameter. \cite{bhattacharya2016bayesian} showed that this condition was sufficient when considering tempered posteriors ${\pi}_{n,\alpha}^{{K}_0}(.|X_1^n)$. \cite{Tempered} extended this assumption in order to obtain the concentration and the consistency of variational approximations of the tempered posteriors $\tilde{\pi}_{n,\alpha}^{{K}_0}(.|X_1^n)$. In addition to the previous prior mass condition, this extension requires the variational set $\mathcal{F}_{K_0}$ to contain probability distributions concentrated around the true parameter. Note that when $\mathcal{F}_{K_0} = \mathcal{M}_{1}^{+}(\Theta_{K_0})$, this goes back to the standard prior mass condition. This extended prior mass condition is standard in the variational Bayes community, see \cite{Tempered,cherief2018consistency}, and can be formulated as follows:

\vspace{0.2cm}

\noindent
\textbf{\textit{Assumption :}}
\textit{We assume that there exists $r_n$ for which there is a distribution $\rho_{K_0,n} \in \mathcal{F}_{K_0}$ such that}:
\begin{equation}
\label{priormass}
  \int \textrm{KL}(P^0,P_{\theta_{K_0}}) \rho_{K_0,n}(d\theta_{K_0}) \leq r_{n} \hspace{0.2cm} \textnormal{and} \hspace{0.2cm} 
  \textrm{KL}(\rho_{K_0,n},\Pi_{K_0}) \leq n r_{n}.
\end{equation}

\begin{remark}
\label{rmk-1} 
Define the KL-ball $\mathcal{B}$ centered at $\theta_0$ of radius $r_n$:
$$ \mathcal{B} = \{ \theta \in \Theta_{K_0} / \hspace{0.1cm} \textnormal{KL}(P_{\theta_0},P_\theta) \leq r_n \} , $$
and consider the restriction $\rho_{K_0,n}$ of $\Pi_{K_0}$ to $\mathcal{B}$. Then it is clear that when  $\rho_{K_0,n} \in \mathcal{F}_{K_0}$, Assumption \ref{priormass} becomes equivalent to the former prior mass condition of \cite{ghosal2000convergence}, i.e. $\Pi_{K_0}(\mathcal{B}) \geq e^{-nr_n}$. The computation of the prior mass $\Pi_{K_0}(\mathcal{B})$ is a major difficulty. It has been raised as a question of interest in \cite{ghosal2000convergence}, and is addressed for categorical distributions and Dirichlet priors in \cite{ghosal2000convergence} (but for an $L_1$-ball) and in \cite{cherief2018consistency} (for a KL-ball). Unfortunately, $\rho_{K_0,n}$ does not belong to $\mathcal{F}_{K_0}$ in general and the computation of the prior mass is no longer sufficient. Nevertheless, the strategy of computing the prior mass of KL-balls remains of interest when dealing with mixture models and mean-field approximation sets, see \cite{cherief2018consistency} where the authors showed that studying the prior mass condition of \cite{ghosal2000convergence} independently on the weights and on each component becomes sufficient.
\end{remark}

\begin{remark}
\label{rmk-2} 
When $\mathcal{F}_{K_0}$ is parametric, it is often possible to overcome the difficulty presented above in order to find a rate $r_n$ as in Assumption \ref{priormass}. Indeed, the point is to express the distribution $\rho_{K_0,n}$ using the general parametric form of the variational family, and to find relevant values of the parameters that will lead to fast rates of convergence $r_n$. This is the strategy we follow in Section \ref{sec:example} for probabilistic PCA. See \cite{Tempered,cherief2018consistency} for other examples of such computations.

\end{remark}

\cite{Tempered} showed that the variational approximation $\tilde{\pi}_{n,\alpha}^{{K}_0}(.|X_1^n)$ associated with a true model is consistent under Assumption \ref{priormass} and that the convergence rate is equal to $r_n$. Nevertheless, in model selection, we do not necessarily know which model is true and the challenge is to be able to find one such that the corresponding approximation is consistent at a comparable convergence rate. We show that the variational approximation $\tilde{\pi}_{n,\alpha}^{\hat{K}}(.|X_1^n)$ associated with the selected model is also consistent at rate $r_n$ as soon as Assumption \ref{priormass} is satisfied:

\begin{theorem}
\label{thm-true-model}
Assume that Assumption \ref{priormass} is satisfied. Then for any $\alpha \in (0,1)$,
\begin{equation*}
\mathbb{E} \bigg[ \int D_{\alpha}( P_{\theta}, P^0 ) \tilde{\pi}_{n,\alpha}^{\hat{K}}(d\theta|X_1^n) \bigg] \leq \frac{1+\alpha}{1-\alpha} r_{n} +  \frac{\log(\frac{1}{\pi_{K_0}})}{n(1-\alpha)}.
\end{equation*}
\end{theorem}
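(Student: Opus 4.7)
The strategy combines an $\alpha$-Renyi moment identity, the Donsker--Varadhan variational formula, and the defining inequality of $\hat{K}$ in a PAC-Bayes style argument.

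First, I would use the i.i.d.\ structure together with the definition of the Renyi divergence to obtain, for every $\theta$ in the global parameter space, the identity
\[
\mathbb{E}_{P^0}\!\bigl[\exp\bigl(\alpha \ell_n(\theta)-\alpha \ell_n(\theta^0)+n(1-\alpha) D_\alpha(P_\theta,P^0)\bigr)\bigr] = 1.
\]
Integrating this against the global prior $\pi=\sum_K \pi_K \Pi_K$ and using Fubini shows that $Z:=\int \exp(\alpha \ell_n(\theta)-\alpha \ell_n(\theta^0)+n(1-\alpha) D_\alpha(P_\theta,P^0))\,\pi(d\theta)$ has $\mathbb{E}[Z]=1$, so $\mathbb{E}[\log Z]\leq 0$ by Jensen. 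Since each summand in $Z=\sum_K \pi_K \int\exp(\cdot)\Pi_K(d\theta_K)$ is nonnegative, dropping all but one term and applying the Donsker--Varadhan inequality to $\Pi_K$ yields, for every $K$ and every $\rho_K\in\mathcal{F}_K$,
\[
\log Z \geq \log \pi_K + n(1-\alpha)\!\int\! D_\alpha(P_{\theta_K},P^0)\rho_K(d\theta_K) - \alpha\ell_n(\theta^0) + \alpha\!\int\!\ell_n\,\rho_K(d\theta_K) - \textrm{KL}(\rho_K,\Pi_K).
\]

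The pivotal step is to specialize this to $K=\hat{K}$ and $\rho_{\hat{K}}=\tilde{\pi}_{n,\alpha}^{\hat{K}}(\cdot|X_1^n)$, whereby the last two terms collapse to $\textrm{ELBO}(\hat{K})$. Rearranging then gives
\[
n(1-\alpha)\!\int\! D_\alpha(P_\theta,P^0)\tilde{\pi}_{n,\alpha}^{\hat{K}}(d\theta|X_1^n) \leq \log Z + \log(1/\pi_{\hat{K}}) - \textrm{ELBO}(\hat{K}) + \alpha\ell_n(\theta^0).
\]
Here the penalty in $\hat{K}=\argmax_K\{\textrm{ELBO}(K)-\log(1/\pi_K)\}$ is exactly what is needed: by definition of $\hat{K}$, $\log(1/\pi_{\hat{K}})-\textrm{ELBO}(\hat{K}) \leq \log(1/\pi_{K_0})-\textrm{ELBO}(K_0)$, so the data-dependent index $\hat{K}$ disappears from the right-hand side and is replaced by the deterministic reference $K_0$.

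Finally, I would take expectations, use $\mathbb{E}[\log Z]\leq 0$, and control $\alpha \mathbb{E}[\ell_n(\theta^0)]-\mathbb{E}[\textrm{ELBO}(K_0)]$ via Assumption~\ref{priormass}. Since $\textrm{ELBO}(K_0)$ is the maximum of its functional over $\mathcal{F}_{K_0}$, evaluating at the candidate $\rho_{K_0,n}$ provided by the assumption gives $\textrm{ELBO}(K_0) \geq \alpha\int\ell_n\,\rho_{K_0,n}(d\theta_{K_0}) - \textrm{KL}(\rho_{K_0,n},\Pi_{K_0})$; combined with the elementary identity $\mathbb{E}[\ell_n(\theta^0)-\ell_n(\theta_{K_0})]=n\,\textrm{KL}(P^0,P_{\theta_{K_0}})$ for fixed $\theta_{K_0}$, Fubini, and the two bounds of Assumption~\ref{priormass}, this produces $\alpha \mathbb{E}[\ell_n(\theta^0)]-\mathbb{E}[\textrm{ELBO}(K_0)]\leq \alpha n r_n + n r_n = (1+\alpha)n r_n$. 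Dividing through by $n(1-\alpha)$ delivers the stated inequality. The main obstacle is engineering the cancellation of $\log(1/\pi_{\hat{K}})$: without the matching penalty in the selection rule, one would be left with the uncontrollable term $\mathbb{E}[\log(1/\pi_{\hat{K}})]$ and the argument would break down.
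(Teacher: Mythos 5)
Your proposal is correct and follows essentially the same route as the paper's proof: the same $\alpha$-Renyi moment identity, Fubini, Donsker--Varadhan, Jensen, the argmax property of $\hat{K}$, and finally Assumption \ref{priormass} evaluated at $\rho_{K_0,n}$. The only cosmetic differences are that you compare $\hat{K}$ directly to $K_0$ rather than first deriving the full oracle inequality over all $K$ and $\rho_K$, and you isolate the model-$K$ block of the prior by dropping the other summands instead of invoking the chain rule $\textrm{KL}(\rho,\pi)=\textrm{KL}(\rho,\Pi_{K})+\log(1/\pi_{K})$; these are equivalent.
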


The inequality in Theorem \ref{thm-true-model} shows the adaptivity of our procedure. Indeed, whatever the value of $\hat{K}$ (which can be different from $K_0$), we obtain the consistency of the selected variational approximation at the same rate of convergence than the estimator associated with the true model (as soon as the additional term in the upper bound is lower than $r_n$, which is the case for prior weights used in practice). We recall that we look for a good estimation of the true distribution $P^0$ and not for an estimation of the true model index $K_0$ which is a different task that would require identifiability assumptions that are stronger than those in our theorem. The overall rate is composed of the convergence rate associated with the true model $\mathcal{M}_{K_0}$, and of a complexity term that reflects the prior belief over the (unknown) true model. For example, if we range a countable number of models according to our prior belief, and we take $\pi_K=2^{-K}$, then the corresponding term will be of order $K_0/n$. More generally, when $\frac{1}{n}\lesssim r_{n}$, we obtain the consistency at the rate associated with the true model.

As a short example, \cite{cherief2018consistency} investigated the case of mixture models. For instance, authors obtained a convergence rate equal to ${K_0 \log(nK_0)}/{n}$ for Gaussian mixtures when there exists a true $K_0$-components mixture model. We study another example in Section \ref{sec:example}.

We can also extend this result to misspecified models. In the model selection literature, only little attention has been put to misspecification when the true distribution does not belong to any of the models, see \cite{modelselectionmisspecification}. Now, we do not assume any longer that there exists a true model, and we show that our ELBO criterion is robust to model misspecification:

\begin{theorem}
\label{thm-extension}
For each index $K$, let us define the set $\Theta_K(r_{K,n})$ of parameters $\theta^*_K \in \Theta_{K}$, for which there is a distribution $\rho_{K,n} \in \mathcal{F}_{K}$ such that:
\begin{equation}
\label{cond}
  \int \mathbb{E}\bigg[\log \frac{P_{\theta^*_K}(X_i)}{P_{\theta_{K}}(X_i)}\bigg] \rho_{K,n}(d\theta_{K}) \leq r_{K,n} \hspace{0.2cm} \textnormal{and} \hspace{0.2cm} 
  \textnormal{KL}(\rho_{K,n},\Pi_{K}) \leq n r_{K,n}.
\end{equation}
Then for any $\alpha \in (0,1)$,
\begin{multline*}
\mathbb{E} \bigg[ \int D_{\alpha}( P_{\theta}, P^0 ) \tilde{\pi}_{n,\alpha}^{\hat{K}}(d\theta|X_1^n) \bigg]  \\ \leq \inf_{K \geq 1} \bigg\{ \frac{\alpha}{1-\alpha} \inf_{\theta^*_K \in \Theta_K(r_{K,n})} \textnormal{KL}(P^0,P_{\theta^*_K}) + \frac{1+\alpha}{1-\alpha} r_{K,n} +  \frac{\log(\frac{1}{\pi_{K}})}{n(1-\alpha)} \bigg\}.
\end{multline*}
\end{theorem}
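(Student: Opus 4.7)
The plan is to apply a PAC-Bayes argument anchored by a Renyi-divergence moment identity. For $\alpha\in(0,1)$, iid $X_1,\ldots,X_n\sim P^0$, and any fixed $\theta$, the definition of $D_\alpha$ gives $\mathbb{E}\exp\{\alpha\ell_n(\theta)-\alpha\ell_n^0 + n(1-\alpha)D_\alpha(P_\theta,P^0)\}=1$, where $\ell_n^0=\sum_i\log P^0(X_i)$; let $f(\theta)$ denote this exponent. Integrating against the global prior $\pi=\sum_K\pi_K\Pi_K$ and using Fubini yields $\mathbb{E}\int e^{f(\theta)}\,d\pi(\theta)=1$. Combining the Donsker--Varadhan inequality $\int f\,d\rho-\mathrm{KL}(\rho,\pi)\leq \log\int e^f\,d\pi$, valid realization-by-realization and hence for data-dependent $\rho$, with Jensen's inequality $\mathbb{E}\log Z\leq \log\mathbb{E}Z$ then produces the master bound $\mathbb{E}[\int f\,d\rho-\mathrm{KL}(\rho,\pi)]\leq 0$ for any $\rho$.

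I will next instantiate this with $\rho=\tilde{\pi}_{n,\alpha}^{\hat{K}}$. Since this measure is supported on $\Theta_{\hat{K}}$, the KL decomposes as $\mathrm{KL}(\tilde{\pi}_{n,\alpha}^{\hat{K}},\pi) = \mathrm{KL}(\tilde{\pi}_{n,\alpha}^{\hat{K}},\Pi_{\hat{K}}) + \log(1/\pi_{\hat{K}})$, which is precisely where the model-complexity penalty surfaces. Expanding $f$ and recognizing that $\alpha\int\ell_n\,d\tilde{\pi}_{n,\alpha}^{\hat{K}}-\mathrm{KL}(\tilde{\pi}_{n,\alpha}^{\hat{K}},\Pi_{\hat{K}}) = \textrm{ELBO}(\hat{K})$, the master bound rearranges to $n(1-\alpha)\,\mathbb{E}\int D_\alpha(P_\theta,P^0)\,d\tilde{\pi}_{n,\alpha}^{\hat{K}} \leq \mathbb{E}[\alpha\ell_n^0-\textrm{ELBO}(\hat{K})+\log(1/\pi_{\hat{K}})]$. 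The selection rule defining $\hat{K}$ then lets me replace $\hat{K}$ on the right by any fixed $K$, and the variational characterization $\textrm{ELBO}(K)\geq \alpha\int\ell_n\,d\rho_K - \mathrm{KL}(\rho_K,\Pi_K)$ (applied to $\rho_K=\rho_{K,n}\in\mathcal{F}_K$ supplied by assumption~(\ref{cond})) allows me to replace $-\textrm{ELBO}(K)$ by $-\alpha\int\ell_n\,d\rho_{K,n}+\mathrm{KL}(\rho_{K,n},\Pi_K)$.

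Finally, to handle misspecification I will split $\mathbb{E}[\alpha\ell_n^0-\alpha\int\ell_n\,d\rho_{K,n}]$ by inserting $\pm\alpha\ell_n(\theta^*_K)$ for each $\theta^*_K\in\Theta_K(r_{K,n})$: the first piece is exactly $\alpha n\,\mathrm{KL}(P^0,P_{\theta^*_K})$ by definition of KL, and the second piece equals $\alpha n\int\mathbb{E}[\log(P_{\theta^*_K}(X_1)/P_\theta(X_1))]\,d\rho_{K,n}(\theta)$, which is at most $\alpha n\,r_{K,n}$ by the first half of~(\ref{cond}); the term $\mathrm{KL}(\rho_{K,n},\Pi_K)$ is absorbed by the second half, contributing $n r_{K,n}$. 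Dividing through by $n(1-\alpha)$ and taking infima over $\theta^*_K$ and then over $K$ produces the stated oracle inequality. The main obstacle is the data-dependence of $\hat{K}$ inside the PAC-Bayes step, which is resolved by noting that Donsker--Varadhan is a pointwise variational inequality and therefore survives arbitrary measurable coupling with the sample; the rest is bookkeeping the countable mixture prior and the likelihood-ratio decomposition relative to the misspecified reference point $\theta^*_K$.
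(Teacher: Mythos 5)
Your proposal is correct and follows essentially the same route as the paper: the Rényi moment identity integrated against the mixture prior, Donsker--Varadhan plus Jensen, the KL decomposition $\mathrm{KL}(\rho,\pi)=\mathrm{KL}(\rho,\Pi_{\hat K})+\log(1/\pi_{\hat K})$, the selection rule and the variational characterization of the ELBO to pass to an arbitrary $K$ and $\rho_{K,n}\in\mathcal{F}_K$, and finally the insertion of the reference point $\theta^*_K$ to split the bias $\mathrm{KL}(P^0,P_{\theta^*_K})$ from the term controlled by Condition~(\ref{cond}). The only cosmetic difference is that you split the log-likelihood around $\theta^*_K$ before taking expectations, whereas the paper first derives the master oracle inequality in terms of $\int\mathrm{KL}(P^0,P_{\theta_K})\,\rho_K(d\theta_K)$ and then decomposes that Kullback--Leibler term; the two are the same computation.
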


Note that when there exists a true model $\mathcal{M}_{K_0}$ such that $P^0=P_{\theta^0}$ with $\theta^0 \in \Theta_{K_0}$, then under Assumption \ref{priormass},  we get $\theta^0 \in\Theta_{K_0}(r_{K_0,n}) $, and we recover Theorem \ref{thm-true-model}. Furthermore, the oracle inequality in Theorem \ref{thm-extension} shows that the selected variational approximation adaptively achieves the best upper bound among the different models $\mathcal{M}_K$, where each upper bound is a trade-off between two terms: a bias due to the error of approximating the true distribution by a distribution in model $\mathcal{M}_K$, and a variance term $r_{K,n}$ (as soon as the penalty term is lower than $r_{K,n}$) that is defined in Condition \ref{cond}.

\section{Application to probabilistic PCA}
\label{sec:example}

We consider here the probabilistic Principal Component Analysis (PCA) problem as an application of our work. From now on, matrices will be denoted in bold capital letters. We assume the model 
$$X_i=\textit{\textbf{W}} Z_i + \sigma^2 \textit{\textbf{I}}_d$$
with i.i.d. Gaussian random variables $Z_i \sim \mathcal{N}(0,\textit{\textbf{I}}_K)$, where $\textit{\textbf{I}}_d$ and $\textit{\textbf{I}}_K$ are respectively the $d$- and $K$-dimensional identity matrices ($K<d$), $\textit{\textbf{W}} \in \mathbb{R}^{d\times K}$ is the $K$-rank matrix that contains the principal axes and $\sigma^2$ is a noisy term that is known. We suppose here that data are centred. Hence, the distribution of each $X_i$ is
$$P_\textit{\textbf{W}}:=\mathcal{N}(0,\textit{\textbf{W}}\textit{\textbf{W}}^T + \sigma^2 \textit{\textbf{I}}_d).$$ 
We are not interested here in estimating the principal axes $\textit{\textbf{W}}$ and selecting the number of components $K$, but in estimating the true distribution of the $X_i$'s.

Each model corresponds to a rank $K$. We place an equal prior weight over each integer $K=1,...,d$. Hence the optimization problem is equivalent to maximizing the ELBO as in \cite{blei2017variational}. Given rank $K$, we place a prior over the $K$-rank matrix $\textit{\textbf{W}}$ to infer a distribution over principal axes. We choose independent Gaussian priors $\mathcal{N}(0,s^2\textit{\textbf{I}}_d)$ on the columns $W_1,...,W_K$ of $\textit{\textbf{W}}$. We also consider Gaussian independent variational approximations $\mathcal{N}(\mu_j,{\bold{\Sigma}}_j)$ for the columns of $\textit{\textbf{W}}$. Then, as soon as there exists a true model, i.e. there exists $K_0$ and $\textit{\textbf{W}}_0 \in \mathbb{R}^{d\times K_0}$ such that the true distribution of each $X_i$ is $P_{\textit{\textbf{W}}_0}=\mathcal{N}(0,\textit{\textbf{W}}_0\textit{\textbf{W}}_0^T + \sigma^2 \textit{\textbf{I}}_d)$, under the assumption that the coefficients of $\textit{\textbf{W}}_0$ are bounded, then Theorem \ref{corPCA} provides an explicit rate of convergence of our variational estimator even when $K_0$ is unknown:

\begin{theorem}
\label{corPCA}
For any $\alpha \in (0,1)$, as soon as there exists a true model $\mathcal{M}_{K_0}$ such that $P^0=P_{\textit{\textbf{W}}_0}$ with $\textit{\textbf{W}}_0 \in \mathbb{R}^{d\times K_0}$ and such that the coefficients of $\textit{\textbf{W}}_0$ are bounded, then:
\begin{equation*}
\mathbb{E} \bigg[ \int D_{\alpha}( P_{\textit{\textbf{W}}}, P_{\textit{\textbf{W}}_0} ) \tilde{\pi}_{n,\alpha}^{\hat{K}}(d\textit{\textbf{W}}|X_1^n) \bigg] = \mathcal{O}\left( \frac{dK_0 \log(dn) }{n} \right) .
\end{equation*}
\end{theorem}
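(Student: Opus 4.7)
The plan is to reduce to Theorem~\ref{thm-true-model} by exhibiting a rate $r_n$ of order $dK_0\log(n)/n$ that satisfies Assumption~\ref{priormass} in the probabilistic PCA setting, and then combining with the penalty term $\log(1/\pi_{K_0})/n = \log(d)/n$ coming from the uniform prior over $K=1,\dots,d$. This last term is exactly what produces the extra $\log d$ inside the logarithm of the final rate.

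The construction of $\rho_{K_0,n}$ is dictated by the variational family: I would take independent Gaussian columns $\rho_{K_0,n}=\bigotimes_{j=1}^{K_0}\mathcal{N}(W_{0,j},\sigma_n^{2}\textit{\textbf{I}}_d)$ centered at the true principal axes with a small variance $\sigma_n^{2}=1/n$. The prior mass side of Assumption~\ref{priormass} is then a direct Gaussian KL computation:
\begin{equation*}
\textrm{KL}(\rho_{K_0,n},\Pi_{K_0})
=\frac{1}{2}\sum_{j=1}^{K_0}\Bigl[d\log\frac{s^2}{\sigma_n^2}+\frac{d\sigma_n^2}{s^2}-d+\frac{\|W_{0,j}\|^2}{s^2}\Bigr].
\end{equation*}
Under the boundedness assumption on the coefficients of $\textit{\textbf{W}}_0$, $\|W_{0,j}\|^2=\mathcal{O}(d)$, so this quantity is $\mathcal{O}(dK_0\log n)$, giving a prior-mass bound $nr_n$ with $r_n\asymp dK_0\log(n)/n$.

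The harder half is the expected KL term $\int\textrm{KL}(P_{\textit{\textbf{W}}_0},P_{\textit{\textbf{W}}})\,\rho_{K_0,n}(d\textit{\textbf{W}})$. Both $P_{\textit{\textbf{W}}_0}$ and $P_{\textit{\textbf{W}}}$ are centered Gaussians in $\mathbb{R}^d$ with covariances $\Sigma_0=\textit{\textbf{W}}_0\textit{\textbf{W}}_0^T+\sigma^2\textit{\textbf{I}}_d$ and $\Sigma=\textit{\textbf{W}}\textit{\textbf{W}}^T+\sigma^2\textit{\textbf{I}}_d$, so
\begin{equation*}
\textrm{KL}(P_{\textit{\textbf{W}}_0},P_{\textit{\textbf{W}}})=\tfrac{1}{2}\bigl[\textrm{tr}(\Sigma^{-1}\Sigma_0)-d-\log\det(\Sigma^{-1}\Sigma_0)\bigr].
\end{equation*}
I would bound this above by a quadratic in the perturbation $E=\textit{\textbf{W}}\textit{\textbf{W}}^T-\textit{\textbf{W}}_0\textit{\textbf{W}}_0^T$, using $\|\Sigma^{-1}\|\leq \sigma^{-2}$ to get $\textrm{KL}(P_{\textit{\textbf{W}}_0},P_{\textit{\textbf{W}}})\lesssim \sigma^{-4}\|E\|_F^2$. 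Writing $\textit{\textbf{W}}=\textit{\textbf{W}}_0+\delta\textit{\textbf{W}}$ with $\delta\textit{\textbf{W}}$ a $d\times K_0$ Gaussian matrix of entry variance $\sigma_n^{2}$, one has $E=\textit{\textbf{W}}_0\delta\textit{\textbf{W}}^T+\delta\textit{\textbf{W}}\textit{\textbf{W}}_0^T+\delta\textit{\textbf{W}}\delta\textit{\textbf{W}}^T$, and taking expectation under $\rho_{K_0,n}$ yields, thanks to the boundedness of $\textit{\textbf{W}}_0$'s entries, $\mathbb{E}\|E\|_F^2\lesssim dK_0\sigma_n^{2}\|\textit{\textbf{W}}_0\|_{\textrm{op}}^{2}+d^{2}K_0^{2}\sigma_n^{4}=\mathcal{O}(dK_0/n)$. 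This gives $\int \textrm{KL}(P_{\textit{\textbf{W}}_0},P_{\textit{\textbf{W}}})\rho_{K_0,n}(d\textit{\textbf{W}})=\mathcal{O}(dK_0/n)\leq r_n$, which confirms the choice $r_n\asymp dK_0\log(n)/n$.

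Plugging this $r_n$ together with $\log(1/\pi_{K_0})=\log d$ into Theorem~\ref{thm-true-model} yields the announced rate $\mathcal{O}(dK_0\log(dn)/n)$. The main technical obstacle is controlling the expected Gaussian-Gaussian KL in a way that is sharp enough to absorb all constants into $\mathcal{O}(dK_0/n)$; the key is to avoid exploding factors by exploiting $\Sigma\succeq \sigma^{2}\textit{\textbf{I}}_d$ uniformly (so $\|\Sigma^{-1}\|$ is deterministically bounded), together with the boundedness of the entries of $\textit{\textbf{W}}_0$, which makes both the deterministic cross-term $\textit{\textbf{W}}_0\delta\textit{\textbf{W}}^T$ and the purely stochastic term $\delta\textit{\textbf{W}}\delta\textit{\textbf{W}}^T$ small enough. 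Everything else is routine Gaussian KL algebra and a direct application of the general oracle inequality.
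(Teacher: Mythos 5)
Your architecture is the paper's: verify Assumption \ref{priormass} with a product of Gaussians centered at the columns of $\textit{\textbf{W}}_0$, then feed the resulting $r_n$ and the penalty $\log(1/\pi_{K_0})/n=\log(d)/n$ into Theorem \ref{thm-true-model}. The gap is quantitative, and it comes from your choice of per-entry variance $\sigma_n^2=1/n$. The theorem only assumes the \emph{coefficients} of $\textit{\textbf{W}}_0$ are bounded, so $\|\textit{\textbf{W}}_0\|_F^2$ (and hence $\|\textit{\textbf{W}}_0\|_2^2$, which you silently treat as $\mathcal{O}(1)$) can be as large as $dK_0C^2$. A direct computation gives $\mathbb{E}\|\textit{\textbf{W}}_0\delta\textit{\textbf{W}}^T\|_F^2=d\sigma_n^2\|\textit{\textbf{W}}_0\|_F^2$, which with $\sigma_n^2=1/n$ can equal $d^2K_0C^2/n$ --- larger than the target $dK_0\log(dn)/n$ by a factor $d/\log(dn)$; your second term $d^2K_0^2\sigma_n^4=d^2K_0^2/n^2$ also only fits under $dK_0\lesssim n$. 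The paper takes $\sigma_n^2=1/(dn^2)$ instead: the integral term in Assumption \ref{priormass} then becomes $\mathcal{O}(dK_0/n^2)$, i.e.\ negligible, and the rate is driven entirely by $\textrm{KL}(\rho_{K_0,n},\Pi_{K_0})/n\asymp dK_0\log(dn)/n$. Your variance is too large by a factor $dn$, and shrinking the variance costs nothing since it only enters the prior-mass side logarithmically.

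A second, related problem is the inequality $\textrm{KL}(P_{\textit{\textbf{W}}_0},P_{\textit{\textbf{W}}})\lesssim\sigma^{-4}\|E\|_F^2$. Writing $2\,\textrm{KL}=\sum_i(1/\mu_i-1+\log\mu_i)$ with $\mu_i$ the eigenvalues of $\bold{\Sigma}_0^{-1/2}\bold{\Sigma}\bold{\Sigma}_0^{-1/2}$ (where $\bold{\Sigma}=\textit{\textbf{W}}\textit{\textbf{W}}^T+\sigma^2\textit{\textbf{I}}_d$ and $\bold{\Sigma}_0=\textit{\textbf{W}}_0\textit{\textbf{W}}_0^T+\sigma^2\textit{\textbf{I}}_d$), one has $1/\mu-1+\log\mu\leq(\mu-1)^2/\big(2\min(1,\mu)^2\big)$, and the only deterministic lower bound available is $\mu_i\geq\sigma^2/(\|\textit{\textbf{W}}_0\|_2^2+\sigma^2)$; the hidden constant in your quadratic bound therefore scales like $\|\textit{\textbf{W}}_0\|_2^4$, potentially $(dK_0)^2$. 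The bound $\|\bold{\Sigma}^{-1}\|_2\leq\sigma^{-2}$ controls the trace part but not the $\log\det$ part. The paper sidesteps this by splitting the Gaussian KL: the trace term is bounded \emph{linearly} in $\|E\|_F$ via Cauchy--Schwarz, $\textrm{Tr}(\bold{\Sigma}^{-1}E)\leq\sqrt{d}\,\sigma^{-2}\|E\|_F$ with $\mathbb{E}\|E\|_F\lesssim\sqrt{d}K_0/n$ under the small variance, and the log-determinant ratio is bounded by $1/(n^2\sigma^2)$ using Jensen's inequality and the log-concavity of the determinant. To repair your argument, keep the construction but set the variance to $1/(dn^2)$ and replace the global quadratic KL bound by this trace/log-det decomposition (or localize the quadratic bound on the event where the $\mu_i$ are close to $1$ and control the tail separately).
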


The proof as well as the computation of the ELBO are detailed in the appendix. Note that this corollary can directly lead to a result in Frobenius distance between covariance matrices $\textit{\textbf{W}}\textit{\textbf{W}}^T+\sigma^2 \textit{\textbf{I}}_d$ and $\textit{\textbf{W}}_0\textit{\textbf{W}}_0^T+\sigma^2 \textit{\textbf{I}}_d$ instead of the Renyi divergence between the corresponding distributions even when $\textit{\textbf{W}}$ and $\textit{\textbf{W}}_0$ are not equal-sized matrices. We denote $\|.\|_F$ the Frobenius norm and $\|.\|_2$ the spectral norm of a matrix, which are respectively defined as the square root of the sum of the absolute squares of the elements of a matrix and as its largest singular value.

The following corollary assesses the consistency of the selected variational approximation to the true covariance matrix in Frobenius norm.  The idea, borrowed from \cite{Tempered}, is to project matrices onto some set of bounded matrices under the assumption that the spectral norm of the true matrix $\textit{\textbf{W}}_0$ is also bounded:

\begin{corollary}
\label{corPCAclip}
For any $\alpha \in (0,1)$, as soon as there exists a true model $\mathcal{M}_{K_0}$ such that $P^0=P_{\textit{\textbf{W}}_0}$ with $\textit{\textbf{W}}_0 \in \mathbb{R}^{d\times K_0}$ and such that the spectral norm of $\textit{\textbf{W}}_0$ is upper bounded by a positive constant $B>0$, then:
\begin{equation*}
\mathbb{E} \bigg[ \int \big\| \textrm{clip}_B(\textit{\textbf{W}}\textit{\textbf{W}}^T) - \textit{\textbf{W}}_0 \textit{\textbf{W}}_0^T \big\|_F^2 \tilde{\pi}_{n,\alpha}^{\hat{K}}(d\textit{\textbf{W}}|X_1^n) \bigg] = \mathcal{O}\left( \frac{dK_0 \log(dn) }{n} \right)
\end{equation*}
where $\textrm{clip}_B(\textit{\textbf{A}})$ is the matrix which $(i,j)$-entry is equal to $ \left\{
\begin{array}{l}
  \textit{\textbf{A}}_{i,j} \hspace{0.3cm} \textrm{if} \hspace{0.2cm} |\textit{\textbf{A}}_{i,j}| \leq B^2 \\
  B^2 \hspace{0.5cm} \textrm{if} \hspace{0.2cm} \textit{\textbf{A}}_{i,j} \geq B^2 \\ - B^2 \hspace{0.2cm} \textrm{otherwise}. \hspace{0.2cm}
\end{array}
\right.$
\end{corollary}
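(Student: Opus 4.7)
The plan is to deduce Corollary \ref{corPCAclip} from Theorem \ref{corPCA} by establishing a pointwise comparison between the clipped Frobenius loss and the Renyi divergence, then integrating under the variational posterior. The key simple observation is that $\|\textit{\textbf{W}}_0\|_2 \leq B$ implies $|(\textit{\textbf{W}}_0\textit{\textbf{W}}_0^T)_{ij}| \leq \|\textit{\textbf{W}}_0\textit{\textbf{W}}_0^T\|_2 \leq B^2$ for every $i,j$, so the target matrix already lies in the clipping cube $[-B^2,B^2]^{d\times d}$. Since entrywise clipping is the Euclidean projection onto this cube, it is a non-expansion toward any of its points, hence
$$\|\textrm{clip}_B(\textit{\textbf{W}}\textit{\textbf{W}}^T) - \textit{\textbf{W}}_0\textit{\textbf{W}}_0^T\|_F^2 \leq \|\textit{\textbf{W}}\textit{\textbf{W}}^T - \textit{\textbf{W}}_0\textit{\textbf{W}}_0^T\|_F^2,$$
which together with the trivial ceiling $4d^2B^4$ is the starting point.

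The technical heart is a deterministic lemma relating this loss to the Renyi divergence of the associated Gaussians. Setting $\Sigma=\textit{\textbf{W}}\textit{\textbf{W}}^T+\sigma^2\textit{\textbf{I}}_d$ and $\Sigma_0 = \textit{\textbf{W}}_0\textit{\textbf{W}}_0^T+\sigma^2\textit{\textbf{I}}_d$, a direct computation for two centred Gaussians gives, in terms of the eigenvalues $\lambda_i$ of $\Sigma_0^{-1/2}\Sigma\Sigma_0^{-1/2}$,
$$D_\alpha(P_{\textit{\textbf{W}}},P_{\textit{\textbf{W}}_0}) = \tfrac{1}{2}\sum_{i=1}^d f(\lambda_i), \qquad f(\lambda) := \tfrac{1}{1-\alpha}\log\bigl(\alpha+(1-\alpha)\lambda\bigr) - \log\lambda.$$
The function $f$ is nonnegative on $(0,\infty)$ with a unique zero at $\lambda=1$, satisfies $f(\lambda)\geq c_0(\alpha,R)(\lambda-1)^2$ on any compact interval $[1/R,R]\subset(0,\infty)$, and blows up at $0$ and $\infty$. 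In the \emph{bounded} regime $\|\textit{\textbf{W}}\|_2 \leq 2B$, clipping is the identity and all $\lambda_i$ lie in a fixed compact subinterval of $(0,\infty)$ depending only on $B,\sigma^2$; combining the quadratic bound on $f$ with $\sum_i(\lambda_i-1)^2 = \|\Sigma_0^{-1/2}(\Sigma-\Sigma_0)\Sigma_0^{-1/2}\|_F^2 \geq \|\Sigma-\Sigma_0\|_F^2/\|\Sigma_0\|_2^2$ yields $D_\alpha \geq c_1 \|\textit{\textbf{W}}\textit{\textbf{W}}^T-\textit{\textbf{W}}_0\textit{\textbf{W}}_0^T\|_F^2$ with $c_1$ depending only on $B,\sigma^2,\alpha$. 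In the complementary regime $\|\textit{\textbf{W}}\|_2 > 2B$, the top eigenvalue of $\Sigma$ exceeds every eigenvalue of $\Sigma_0$ by a definite amount, so $f$ is bounded below by a positive constant at that eigenvalue and one can combine this lower bound on $D_\alpha$ with the uniform ceiling $4d^2B^4$ on the clipped loss.

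Integrating this pointwise inequality against $\tilde{\pi}_{n,\alpha}^{\hat{K}}(d\textit{\textbf{W}}|X_1^n)$ and taking expectation under $P^0$ then reduces Corollary \ref{corPCAclip} directly to Theorem \ref{corPCA}, producing the claimed rate $\mathcal{O}(dK_0\log(dn)/n)$. The main obstacle is handling the unbounded regime without spoiling the dimensional factor in the final rate: a naive \emph{trivial ceiling times posterior mass of the unbounded region} estimate, obtained through Markov's inequality from Theorem \ref{corPCA}, would cost extra powers of $d$, and one must instead exploit the joint structure of the eigenvalues of $\Sigma$ more carefully, in the spirit of the Gaussian covariance calculations performed in \cite{Tempered} for the fixed-rank version of probabilistic PCA, so as to match the entrywise $d^2$ growth of the clipped Frobenius error by a commensurate lower bound on the Renyi divergence.
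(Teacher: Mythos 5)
Your reduction to Theorem \ref{corPCA} via a pointwise comparison of the clipped Frobenius loss with the Renyi divergence is exactly the paper's strategy, and your two ingredients for the bounded regime are both in the paper: the non-expansiveness of $\textrm{clip}_B$ towards the point $\textit{\textbf{W}}_0\textit{\textbf{W}}_0^T$ of the cube, and a quadratic lower bound of the form $D_\alpha(P_{\textit{\textbf{W}}},P_{\textit{\textbf{W}}_0}) \geq c\,\|\textit{\textbf{W}}\textit{\textbf{W}}^T-\textit{\textbf{W}}_0\textit{\textbf{W}}_0^T\|_F^2$ valid when both covariances are spectrally bounded (this is the paper's Lemma \ref{corStrongCvx}, obtained there from strong convexity of $\textit{\textbf{A}}\mapsto-\log\det(\textit{\textbf{A}}+\sigma^2\textit{\textbf{I}}_d)$ on a spectral ball rather than from your scalar eigenvalue function $f$, but with the same content and the same dependence of the constant on $B,\sigma^2,\alpha$).

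The gap is the one you flag yourself and do not close: the regime $\|\textit{\textbf{W}}\|_2>2B$. Your fallback there --- a constant lower bound on $D_\alpha$ against the ceiling $4d^2B^4$ --- produces a pointwise constant of order $d^2$ and hence only the rate $d^3K_0\log(dn)/n$, and the sharper pointwise inequality you say one ``must'' establish does not in fact exist. Take $\textit{\textbf{W}}=[\textit{\textbf{W}}_0,\sqrt{t}\,v]$ with $v=(1,\dots,1)^T/\sqrt{d}$ and $t=3dB^2$: every entry of $\textit{\textbf{W}}\textit{\textbf{W}}^T$ then exceeds $B^2$, so $\|\textrm{clip}_B(\textit{\textbf{W}}\textit{\textbf{W}}^T)-\textit{\textbf{W}}_0\textit{\textbf{W}}_0^T\|_F^2\asymp d^2B^4$, whereas $\Sigma_0^{-1/2}\Sigma\Sigma_0^{-1/2}$ has $d-1$ unit eigenvalues and a single eigenvalue $1+t\,v^T\Sigma_0^{-1}v$, so that $D_\alpha=\tfrac12 f(1+t\,v^T\Sigma_0^{-1}v)\asymp\log d$. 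No dimension-free constant can relate the two pointwise, so the proof cannot be completed along the route you describe; closing it would require a bound on the posterior mass of the unbounded region sharper than the Markov bound extracted from Theorem \ref{corPCA}. For comparison, the paper does not split into regimes at all: it chains the projection inequality with Lemma \ref{corStrongCvx} applied to every posterior draw $\textit{\textbf{W}}$, even though that lemma's hypothesis $\|\textit{\textbf{W}}\textit{\textbf{W}}^T\|_2\le B^2$ is never verified for such draws. The difficulty you identify is therefore genuine, and is in fact the weak point of the published argument as well, but your proposal leaves it open.
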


The requirement in our corollary is that the spectral norm of the true matrix $\textit{\textbf{W}}_0$ is bounded by some positive constant $B$, which implies the boundedness of the coefficients of the matrix as required in Theorem \ref{corPCA}. In particular, the coefficients of the matrix $\textit{\textbf{W}}_0 \textit{\textbf{W}}_0^T$ are bounded by $B^2$:
\begin{align*}
|(\textit{\textbf{W}}_0 \textit{\textbf{W}}_0^T)_{i,j}| = \bigg| \sum_{k=1}^{K_0} (\textit{\textbf{W}}_0)_{i,k} (\textit{\textbf{W}}_0)_{j,k} \bigg| & \leq \bigg( \sum_{k=1}^{K_0} (\textit{\textbf{W}}_0)_{i,k}^2 \bigg)^{1/2} \bigg( \sum_{k=1}^{K_0} (\textit{\textbf{W}}_0)_{j,k}^2 \bigg)^{1/2} \\
& = \frac{\| \textit{\textbf{W}}_0 e_i \|_2}{\|e_i\|_2} \frac{\| \textit{\textbf{W}}_0 e_j \|_2}{\|e_j\|_2} \leq \| \textit{\textbf{W}}_0 \|_2^2 \leq B^2 
\end{align*}
using Cauchy-Schwarz inequality and the property $\|\textit{\textbf{W}}_0\|_2 = \max_{x\ne 0} \frac{\|\textit{\textbf{W}}_0 x \|_2}{\|x\|_2}$ where $e_\ell$ is the vector of $\mathbb{R}^d$ which components are all equal to $0$ except for the $\ell$-th one that is set to $1$.
Hence it seems sensible to project (with respect to the Frobenius distance) any estimator $\textit{\textbf{W}} \textit{\textbf{W}}^T$ onto the set of all matrices whose entries lie in the interval $[- B^2, B^2]$, which is exactly what the $\textit{clip}_B$ application does. Note that the spectral norm of Matrix $\textit{\textbf{W}}_0$ is equal to the largest eigenvalue of $\textit{\textbf{W}}_0 \textit{\textbf{W}}_0^T$, so our assumption comes back to upper bounding the eigenvalues of the covariance matrix $\textit{\textbf{W}}_0 \textit{\textbf{W}}_0^T+\sigma^2 \textit{\textbf{I}}_d$, which is a classical assumption when estimating covariance matrices, see for instance \cite{MatrixEstimationWu}.

It is also possible to obtain a consistent pointwise covariance matrix estimator with the same convergence rate:

\begin{corollary}
\label{corPCAclipfreq}
For any $\alpha \in (0,1)$, as soon as there exists a true model $\mathcal{M}_{K_0}$ such that $P^0=P_{\textit{\textbf{W}}_0}$ with $\textit{\textbf{W}}_0 \in \mathbb{R}^{d\times K_0}$ and such that the spectral norm of $\textit{\textbf{W}}_0$ is bounded by $B$. Let us define a pointwise estimator of the covariance matrix:
$$
\hat{\mathit{\bold{\Sigma}}} = \int \textrm{clip}_B(\textit{\textbf{W}}\textit{\textbf{W}}^T) \tilde{\pi}_{n,\alpha}^{\hat{K}}(d\textit{\textbf{W}}|X_1^n) + \sigma^2 \textit{\textbf{I}}_d.
$$
Then,
\begin{equation*}
\mathbb{E} \bigg[ \big\| \hat{\bold{\Sigma}} - (\textit{\textbf{W}}_0 \textit{\textbf{W}}_0^T+\sigma^2 \textit{\textbf{I}}_d) \big\|_F^2 \bigg] = \mathcal{O}\left( \frac{dK_0 \log(dn) }{n} \right).
\end{equation*}
\end{corollary}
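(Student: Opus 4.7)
The plan is to deduce this pointwise bound as a direct consequence of Corollary~\ref{corPCAclip} via Jensen's inequality. First, the additive term $\sigma^2 \textit{\textbf{I}}_d$ in $\hat{\bold{\Sigma}}$ cancels with the corresponding term in $\textit{\textbf{W}}_0 \textit{\textbf{W}}_0^T+\sigma^2 \textit{\textbf{I}}_d$, and since $\tilde{\pi}_{n,\alpha}^{\hat{K}}(\cdot|X_1^n)$ is a probability measure, the constant matrix $\textit{\textbf{W}}_0 \textit{\textbf{W}}_0^T$ can be pulled inside the integral, yielding
$$\hat{\bold{\Sigma}} - (\textit{\textbf{W}}_0 \textit{\textbf{W}}_0^T+\sigma^2 \textit{\textbf{I}}_d) = \int \big[\textrm{clip}_B(\textit{\textbf{W}}\textit{\textbf{W}}^T) - \textit{\textbf{W}}_0 \textit{\textbf{W}}_0^T\big]\, \tilde{\pi}_{n,\alpha}^{\hat{K}}(d\textit{\textbf{W}}|X_1^n).$$

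Next, the squared Frobenius norm $\textit{\textbf{A}} \mapsto \|\textit{\textbf{A}}\|_F^2$ is a convex function of its matrix argument (it is a sum of squares of linear functionals of the entries), so Jensen's inequality applied to the probability measure $\tilde{\pi}_{n,\alpha}^{\hat{K}}(\cdot|X_1^n)$ gives
$$\big\| \hat{\bold{\Sigma}} - (\textit{\textbf{W}}_0 \textit{\textbf{W}}_0^T+\sigma^2 \textit{\textbf{I}}_d) \big\|_F^2 \leq \int \big\|\textrm{clip}_B(\textit{\textbf{W}}\textit{\textbf{W}}^T) - \textit{\textbf{W}}_0 \textit{\textbf{W}}_0^T\big\|_F^2\, \tilde{\pi}_{n,\alpha}^{\hat{K}}(d\textit{\textbf{W}}|X_1^n).$$
Taking expectation with respect to $X_1^n \sim (P^0)^{\otimes n}$ on both sides and invoking Corollary~\ref{corPCAclip}, whose hypotheses are identical to those of the present statement, then yields the announced rate $\mathcal{O}(dK_0\log(dn)/n)$.

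The only real work in this chain is already contained in Corollary~\ref{corPCAclip} (itself resting on Theorem~\ref{thm-extension} and the prior-mass computation underlying Theorem~\ref{corPCA}); the Jensen step above is automatic and introduces no additional assumption. In particular, I do not expect any substantive obstacle here: clipping ensures the integrand is uniformly bounded, so the exchange between the outer expectation over $X_1^n$ and the inner integral against $\tilde{\pi}_{n,\alpha}^{\hat{K}}$ is a routine Fubini argument, and there is no need to control the randomness of $\hat{K}$ separately since the bound of Corollary~\ref{corPCAclip} already holds for the selected index.
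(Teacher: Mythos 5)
Your argument is correct and matches the paper's own (one-line) proof, which simply invokes "a simple convexity argument": pulling the constant matrix inside the integral, applying Jensen's inequality to the convex map $\textit{\textbf{A}} \mapsto \|\textit{\textbf{A}}\|_F^2$, and then invoking Corollary \ref{corPCAclip}. Your write-up just makes that convexity step explicit.
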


\section*{Discussion}

In this paper we proved the consistency of ELBO maximization in model selection. By penalizing the variational lower bound using our prior beliefs over the different models, we showed that under mild conditions, the variational approximation associated with the selected model is consistent at the same convergence rate than the approximation associated with the true model. Moreover, the oracle inequality in Theorem \ref{thm-extension} proved that the selected approximation is robust to misspecification. An application to the selection of the number of principal components in probabilistic PCA was provided as a short example.

We discuss in Appendix \ref{apd:connection} the connection between the notions of \textit{consistency} and \textit{concentration}. This justifies the use of the $\alpha$ parameter in the definition of the evidence lower bound, as the regular posterior distribution is not robust to model misspecification. Indeed, authors of \cite{grunwaldmisspecifiation} explain that there are pathologic cases where the regular posterior does not concentrate to the true distribution.

A point of interest when dealing with model selection is the question of recovering the true model (when it exists). This issue falls beyond the scope of this paper which treats the question of estimating the true distribution, and can be the object of future works. The true model recovery would require stronger assumptions, but the implementation in Section 5 in \cite{VariationalComponents} suggests that those may hold for probabilistic PCA.

Also, it would be interesting to study cross-validation instead of ELBO maximization. However, the tools used in this work such as the theory of penalized criteria and oracle inequalities were particularly suited to the ELBO, and thus a different theory should be used in order to obtain the consistency of validation log-likelihood in the VB framework. This question is left for future research.

\acks{I would like to warmly thank Pierre Alquier, Lionel Riou-Durand and the anonymous referees for their inspiring comments and suggestions on this work.}

\bibliographystyle{apalike}

\newpage

\appendix

\section{Connection between consistency and concentration.}\label{apd:connection}

In this appendix, we highlight the connection between the notions of \textit{consistency} used in \cite{Tempered} and \cite{cherief2018consistency} and \textit{concentration}. We consider a true model $\mathcal{M}_{K_0}$ to which the true distribution $P^0=P_{\theta^0}$ belongs, $\theta^0 \in \Theta_{K_0}$. We recall that the Bayesian estimator $ \tilde{\pi}_{n,\alpha}^{\hat{K}}(.|X_1^n) $ is said to be consistent if, in expectation (with respect to the random variables distributed according to $P^0$), the average Renyi loss between a distribution in the selected model and the true distribution (over the Bayesian estimator) goes to zero as $n\rightarrow +\infty$:
$$
\mathbb{E} \bigg[ \int D_{\alpha}( P_{\theta}, P^0 ) \tilde{\pi}_{n,\alpha}^{\hat{K}}(d\theta|X_1^n) \bigg] \xrightarrow[n\rightarrow +\infty]{} 0.
$$
Similarly, we give the definition of \textit{concentration} at rate $s_n$ of the selected variational approximation to $P^0$ as stated in \cite{ghosal2000convergence}, that is, in probability (with respect to the random variables distributed according to $P^0$), the approximation concentrates asymptotically around the true distribution as $n\rightarrow +\infty$, i.e. in probability:
$$
\tilde{\pi}_{n,\alpha}^{\hat{K}}\bigg(D_{\alpha}( P_{\theta}, P^0 )>M s_n|X_1^n\bigg) \xrightarrow[n\rightarrow +\infty]{} 0
$$
for any constant $M>0$. The reference metric here is the $\alpha$-Renyi divergence.

We show in this appendix that the consistency of the selected variational approximation to $P^0$ at rate $r_n$ implies the concentration of the selected variational approximation to $P^0$ at any rate $s_n$ such that $r_n=o(s_n)$ and $s_n \rightarrow 0$ as $n\rightarrow +\infty$, as for instance $s_n=r_n \log(\log(n))$ when the consistency rate $r_n$ is slower than a log-logarithmic one. 

To do so, we assume that the selected variational approximation is consistent to $P^0$ at rate $r_n$, i.e.:
$$
\mathbb{E} \bigg[ \int D_{\alpha}( P_{\theta}, P^0 ) \tilde{\pi}_{n,\alpha}^{\hat{K}}(d\theta|X_1^n) \bigg] \leq r_n.
$$
Then, using Markov's inequality for any $s_n$ such that $r_n=o(s_n)$ and $s_n  \rightarrow 0$ and any constant $M>0$:
$$
\mathbb{E} \bigg[ \tilde{\pi}_{n,\alpha}^{\hat{K}}\bigg(D_{\alpha}( P_{\theta}, P^0 )>M s_n|X_1^n\bigg) \bigg] \leq \frac{\mathbb{E} \bigg[ \int D_{\alpha}( P_{\theta}, P^0 ) \tilde{\pi}_{n,\alpha}^{\hat{K}}(d\theta|X_1^n) \bigg]}{M s_n} \leq \frac{r_n}{Ms_n} \xrightarrow[n\rightarrow +\infty]{} 0.
$$
Hence, we obtain the convergence in mean of $\tilde{\pi}_{n,\alpha}^{\hat{K}}\big(D_{\alpha}( P_{\theta}, P^0 )>M s_n|X_1^n\big)$ to $0$, which implies the convergence in probability of $\tilde{\pi}_{n,\alpha}^{\hat{K}}\big(D_{\alpha}( P_{\theta}, P^0 )>M s_n|X_1^n\big)$ to $0$, i.e. the concentration of $\tilde{\pi}_{n,\alpha}^{\hat{K}}(.|X_1^n)$ to $P^0$ at rate $s_n$.

\section{Proof of Theorem \ref{thm-true-model}.}\label{apd:proof-thm-one}

First, we need Donsker and Varadhan's famous variational formula. Refer for example to \cite{MR2483528} for a proof (Lemma 1.1.3).

\begin{lemma}
\label{thm-dv}
For any probability $\lambda$ on some measurable space $(\textbf{E},\mathcal{E})$ and any measurable function
  $h: \textbf{E} \rightarrow \mathbb{R}$ such that $\int{\rm e}^h  \rm{d}\lambda < \infty$,
  \begin{equation*}
    \log\int {\rm e}^h \mathrm{d}\lambda = \underset{\rho \in \mathcal{M}_{1}^+(\textbf{E})}{\sup} \bigg\{ \int h \mathrm{d}\rho - \textrm{KL}(\rho,\lambda) \bigg\},
  \end{equation*}
  with the convention
  $\infty-\infty =-\infty$. Moreover, if $h$ is upper-bounded on the
  support of $\lambda$, then the supremum  on the right-hand
  side is reached by the distribution of the form:
  \begin{equation*}
    \lambda_h(d\beta) =
    \frac{{\rm e}^{h(\beta)} }{\int{\rm e}^h \mathrm{d}\lambda} \lambda(\mathrm{d}\beta).
  \end{equation*}
\end{lemma}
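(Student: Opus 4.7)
The plan is to establish the identity via a change of reference measure, reducing the variational expression to a Kullback--Leibler divergence against the Gibbs distribution $\lambda_h$, and then conclude by Gibbs' inequality. I would first treat the case where $h$ is upper bounded on the support of $\lambda$, so that the normalizing constant $Z := \int e^{h}\, d\lambda$ satisfies $0 < Z < \infty$. The candidate maximizer $\lambda_h$ is then a bona fide probability measure, absolutely continuous with respect to $\lambda$ with density $e^{h}/Z$.

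The central algebraic step would go as follows. For any $\rho \in \mathcal{M}_1^+(\mathbf{E})$ with $\rho \ll \lambda$ (otherwise $\textrm{KL}(\rho,\lambda) = +\infty$ and the claimed inequality is trivial, using precisely the convention $\infty - \infty = -\infty$), write $f = d\rho/d\lambda$. Then $d\rho/d\lambda_h = (Z/e^{h})\, f$, and a direct computation gives
\begin{equation*}
\textrm{KL}(\rho, \lambda_h) \;=\; \int \log\!\Bigl(\tfrac{Z f}{e^{h}}\Bigr) d\rho \;=\; \log Z - \int h\, d\rho + \textrm{KL}(\rho,\lambda),
\end{equation*}
which rearranges to the key identity
\begin{equation*}
\int h\, d\rho - \textrm{KL}(\rho, \lambda) \;=\; \log Z - \textrm{KL}(\rho, \lambda_h).
\end{equation*}
Since $\textrm{KL}(\rho, \lambda_h) \geq 0$ by Gibbs' inequality (itself a consequence of Jensen's inequality applied to $-\log$), the right-hand side is at most $\log Z$, with equality if and only if $\rho = \lambda_h$. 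Taking the supremum over $\rho$ then yields both the variational formula and the attainment at $\lambda_h$.

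The remaining subtlety, which I expect to be the only real obstacle, is extending this to $h$ that is not bounded above but still satisfies $\int e^{h} d\lambda < \infty$. For this I would truncate by setting $h_k = \min(h, k)$, apply the bounded case to get the identity for each $k$, and then pass to the limit: $\int e^{h_k} d\lambda \uparrow Z$ by monotone convergence, while for any fixed $\rho \ll \lambda$ the equality established above guarantees $\int h_k\, d\rho$ is controlled by $\log Z + \textrm{KL}(\rho,\lambda)$ uniformly in $k$, so monotone and dominated convergence deliver the identity in the limit. The attainment clause concerns the upper-bounded case and follows directly from the equality condition in Gibbs' inequality, so no further argument is needed there. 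The whole proof therefore reduces to one line of measure-theoretic algebra plus Jensen's inequality; the care required is purely in bookkeeping the conventions for $\pm\infty$ and in the truncation argument for unbounded $h$.
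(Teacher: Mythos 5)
Your proposal is correct: the change of reference measure to the Gibbs distribution $\lambda_h$, followed by nonnegativity of $\textrm{KL}(\rho,\lambda_h)$ with its equality case, and a truncation argument for unbounded $h$, is the standard proof of the Donsker--Varadhan formula. Note that the paper itself does not prove this lemma but simply defers to Catoni (2007, Lemma 1.1.3), where essentially this same argument is carried out, so there is nothing in the paper to contrast your approach with.
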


We come back to the proof of Theorem \ref{thm-true-model}. We adapt the proof of Theorem 4.1 in \cite{cherief2018consistency}. 

\begin{proof}
For any $\alpha \in (0,1)$ and $\theta \in \Omega := \cup_{K \geq 1} \Theta_K$, using the definition of Renyi divergence and $D_\alpha(P^{\otimes n},R^{\otimes n})=nD_\alpha(P,R)$ as data are i.i.d.:
$$
\mathbb{E}\bigg[ \exp\bigg(-\alpha r_{n}(P_\theta,P^0) + (1-\alpha)n D_\alpha(P_\theta,P^0)\bigg) \bigg] = 1
$$
where $r_n(P_\theta,P^0)=\sum_{i=1}^n \log({P^0(X_i)}/{P_\theta(X_i)})$ is the negative log-likelihood ratio.
Then we integrate and use Fubini's theorem, 
$$
\mathbb{E}\bigg[ \int \exp\bigg(-\alpha r_{n}(P_\theta,P^0) + (1-\alpha)n D_\alpha(P_\theta,P^0) \bigg) \pi(d\theta) \bigg] = 1.
$$
Using Lemma \ref{thm-dv},
\begin{multline*}
\mathbb{E}\bigg[ \exp\bigg( \sup_{\rho \in \mathcal{M}_1^+(\Omega)} \bigg\{ \int \bigg( -\alpha r_{n}(P_\theta,P^0) + (1-\alpha)n D_\alpha(P_\theta,P^0) \bigg) \rho(d\theta) - \textrm{KL}(\rho,\pi) \bigg\} \bigg) \bigg] = 1.
\end{multline*}
Then, using Jensen's inequality, $$ 
\mathbb{E}\bigg[ \sup_{\rho \in \mathcal{M}_1^+(\Omega)} \bigg\{ \int \bigg( -\alpha r_{n}(P_\theta,P^0) + (1-\alpha)n D_\alpha(P_\theta,P^0) \bigg) \rho(d\theta) - \textrm{KL}(\rho,\pi) \bigg\} \bigg] \leq 0.
$$
Now, we consider $\tilde{\pi}^{\hat{K}}_{n,\alpha}(.|X_1^n)$ as a distribution on $\mathcal{M}_1^+(\Omega)$ with all its mass on $\Theta_{\hat{K}}$, $$ 
\mathbb{E}\bigg[ \int \bigg( -\alpha r_{n}(P_\theta,P^0) + (1-\alpha)n D_\alpha(P_\theta,P^0) \bigg) \tilde{\pi}_{n,\alpha}^{\hat{K}}(d\theta|X_1^n) - \textrm{KL}(\tilde{\pi}^{\hat{K}}_{n,\alpha}(.|X_1^n),\pi) \bigg] \leq 0.
$$
We use $\textrm{KL}(\tilde{\pi}^{\hat{K}}_{n,\alpha}(.|X_1^n),\pi)=\textrm{KL}(\tilde{\pi}^{\hat{K}}_{n,\alpha}(.|X_1^n),\Pi_{\hat{K}})+\log(\frac{1}{\pi_{\hat{K}}})$, and we rearrange terms:
\begin{multline*}
\mathbb{E}\bigg[ \int D_\alpha(P_\theta,P^0) \tilde{\pi}_{n,\alpha}^{\hat{K}}(d\theta|X_1^n) \bigg] \\ \leq 
\mathbb{E}\bigg[ \frac{\alpha}{1-\alpha} \int \frac{r_{n}(P_\theta,P^0)}{n} \tilde{\pi}_{n,\alpha}^{\hat{K}}(d\theta|X_1^n) + \frac{\textrm{KL}(\tilde{\pi}^{\hat{K}}_{n,\alpha}(.|X_1^n),\Pi_{\hat{K}})}{n(1-\alpha)}+\frac{\log(\frac{1}{\pi_{\hat{K}}})}{n(1-\alpha)} \bigg].
\end{multline*}
By definition of $\hat{K}$,
\begin{multline*}
\mathbb{E}\bigg[ \int D_\alpha(P_\theta,P^0) \tilde{\pi}_{n,\alpha}^{\hat{K}}(d\theta|X_1^n) \bigg] \\ \leq \mathbb{E}\bigg[  \inf_{K \geq 1} \bigg\{ \frac{\alpha}{1-\alpha} \int \frac{r_{n}(P_\theta,P^0)}{n} \tilde{\pi}_{n,\alpha}^{{K}}(d\theta|X_1^n) + \frac{\textrm{KL}(\tilde{\pi}^{{K}}_{n,\alpha}(.|X_1^n),\Pi_{{K}})}{n(1-\alpha)}+\frac{\log(\frac{1}{\pi_{{K}}})}{n(1-\alpha)} \bigg\} \bigg]
\end{multline*}
which gives 
\begin{multline*}
\mathbb{E}\bigg[ \int D_\alpha(P_\theta,P^0) \tilde{\pi}_{n,\alpha}^{\hat{K}}(d\theta|X_1^n) \bigg] \\
\leq \inf_{K \geq 1} \bigg\{ \mathbb{E}\bigg[ \frac{\alpha}{1-\alpha} \int \frac{r_{n}(P_\theta,P^0)}{n} \tilde{\pi}_{n,\alpha}^{{K}}(d\theta|X_1^n) + \frac{\textrm{KL}(\tilde{\pi}^{{K}}_{n,\alpha}(.|X_1^n),\Pi_{{K}})}{n(1-\alpha)}+\frac{\log(\frac{1}{\pi_{{K}}})}{n(1-\alpha)} \bigg] \bigg\}
\end{multline*}
and hence, by definition of $\tilde{\pi}^{{K}}_{n,\alpha}(.|X_1^n)$, \begin{multline*}
\mathbb{E}\bigg[ \int D_\alpha(P_\theta,P^0) \tilde{\pi}_{n,\alpha}^{\hat{K}}(d\theta|X_1^n) \bigg] \\ \leq \inf_{K \geq 1} \bigg\{ \mathbb{E}\bigg[ \inf_{\rho \in \mathcal{F}_K} \bigg\{ \frac{\alpha}{1-\alpha} \int \frac{r_{n}(P_\theta,P^0)}{n} \rho(d\theta) + \frac{\textrm{KL}(\rho,\Pi_{{K}})}{n(1-\alpha)} \bigg\} +\frac{\log(\frac{1}{\pi_{{K}}})}{n(1-\alpha)} \bigg] \bigg\}.
\end{multline*}
which leads to, \begin{multline*}
\mathbb{E}\bigg[ \int D_\alpha(P_\theta,P^0) \tilde{\pi}_{n,\alpha}^{\hat{K}}(d\theta|X_1^n) \bigg]\\  \leq \inf_{K \geq 1} \inf_{\rho \in \mathcal{F}_K} \bigg\{ \mathbb{E}\bigg[ \frac{\alpha}{1-\alpha} \int \frac{r_{n}(P_\theta,P^0)}{n} \rho(d\theta) + \frac{\textrm{KL}(\rho,\Pi_{{K}})}{n(1-\alpha)} +\frac{\log(\frac{1}{\pi_{{K}}})}{n(1-\alpha)} \bigg] \bigg\}.
\end{multline*}
Finally,
\begin{multline*}
\mathbb{E}\bigg[ \int D_\alpha(P_\theta,P^0) \tilde{\pi}_{n,\alpha}^{\hat{K}}(d\theta|X_1^n) \bigg] \\
\leq \inf_{K \geq 1} \bigg\{ \inf_{\rho_K \in \mathcal{F}_K} \bigg\{ \frac{\alpha}{1-\alpha} \int \textrm{KL}(P^0,P_{\theta_K}) \rho_K(d\theta_K) + \frac{\textrm{KL}(\rho_K,\Pi_K)}{n(1-\alpha)} \bigg\} +  \frac{\log(\frac{1}{\pi_K})}{n(1-\alpha)} \bigg\}.
\end{multline*}
The theorem is a direct corollary of this inequality as soon as Assumption \ref{priormass} is satisfied.

\end{proof}

\section{Proof of Theorem \ref{thm-extension}.}\label{apd:firsthalf}

\begin{proof}

Fix $\alpha \in (0,1)$ and let us prove Theorem \ref{thm-extension}. Let us recall that $\Theta_K(r_{K,n})$ is defined as the set of parameters $\theta^*_K \in \Theta_{K}$, for which there is a distribution $\rho_{K,n} \in \mathcal{F}_{K}$ such that:
\begin{equation*}
  \int \mathbb{E}\bigg[\log \frac{P_{\theta^*_K}(X_i)}{P_{\theta_{K}}(X_i)}\bigg] \rho_{K,n}(d\theta_{K}) \leq r_{K,n} \hspace{0.2cm} \text{and} \hspace{0.2cm} 
  \textrm{KL}(\rho_{K,n},\Pi_{K}) \leq n r_{K,n}.
\end{equation*}

We begin from:
\begin{multline*}
\mathbb{E}\bigg[ \int D_\alpha(P_\theta,P^0) \tilde{\pi}_{n,\alpha}^{\hat{K}}(d\theta|X_1^n) \bigg] \\
\leq \inf_{K \geq 1} \bigg\{ \inf_{\rho_K \in \mathcal{F}_K} \bigg\{ \frac{\alpha}{1-\alpha} \int \textrm{KL}(P^0,P_{\theta_K}) \rho_K(d\theta_K) + \frac{\textrm{KL}(\rho_K,\Pi_K)}{n(1-\alpha)} \bigg\} +  \frac{\log(\frac{1}{\pi_K})}{n(1-\alpha)} \bigg\}.
\end{multline*}
Then, we write for any $K$, any $\theta_K \in \Theta_K$, $\theta^*_K \in \Theta_K$:
$$
\textrm{KL}(P^0,P_{\theta_K}) = \textrm{KL}(P^0,P_{\theta_K^*}) + \mathbb{E}\bigg[\log \frac{P_{\theta^*_K}(X_i)}{P_{\theta_K}(X_i)}\bigg]
$$
which gives:
\begin{multline*}
\mathbb{E} \bigg[ \int D_{\alpha}( P_{\theta}, P^0 ) \tilde{\pi}_{n,\alpha}^{\hat{K}}(d\theta|X_1^n) \bigg]
\\
\leq \inf_{K \geq 1} \bigg\{ \inf_{\theta^*_K \in \Theta_K} \bigg\{ \frac{\alpha}{1-\alpha} \textrm{KL}(P^0,P_{\theta^*_K}) + \inf_{\rho_K \in \mathcal{F}_K} \bigg\{ \frac{\alpha}{1-\alpha} \int \mathbb{E}\bigg[\log \frac{P_{\theta^*_K}(X_i)}{P_{\theta_K}(X_i)}\bigg] \rho_K(d\theta_K) \\ 
+ \frac{\textrm{KL}(\rho_K,\Pi_K)}{n(1-\alpha)} \bigg\} \bigg\} +  \frac{\log(\frac{1}{\pi_K})}{n(1-\alpha)} \bigg\}.
\end{multline*}
Hence, using the definition of $\Theta_{K}(r_{K,n})$ and upper bounding the right-hand-side of the previous inequality by an inf over $\Theta_{K}(r_{K,n})$, we conclure:
\begin{equation*}
\mathbb{E} \bigg[ \int D_{\alpha}( P_{\theta}, P^0 ) \tilde{\pi}_{n,\alpha}^{\hat{K}}(d\theta|X_1^n) \bigg] \leq \inf_{K \geq 1} \bigg\{ \frac{\alpha}{1-\alpha} \inf_{\theta^* \in \Theta_K(r_{K,n})} \textrm{KL}(P^0,P_{\theta^*_K}) + \frac{1+\alpha}{1-\alpha} r_{K,n} +  \frac{\log(\frac{1}{\pi_{K}})}{n(1-\alpha)} \bigg\}.
\end{equation*}

\end{proof}

\section{Proof of Theorem \ref{corPCA}.}\label{apd:second}

\begin{proof}

We still consider the framework of probabilistic PCA in Section \ref{sec:example}. We assume that there exists a true rank $K_0$ and a matrix $\textit{\textbf{W}}_0 \in \mathbb{R}^{d\times K_0}$ with bounded coefficients such that the true distribution of each $X_i$ is $\mathcal{N}(0,\textit{\textbf{W}}_0\textit{\textbf{W}}_0^T + \sigma^2 \textit{\textbf{I}}_d)$, and we place a prior $\Pi_{K_0}=\mathcal{N}(0,s^2\textit{\textbf{I}}_d)^{\otimes K_0}$ and a variational approximation $\rho_{K_0}=\rho^{\otimes K_0}$ on $W$ given $K=K_0$ where we denote $\rho=\mathcal{N}(0,\frac{1}{dn^2}\textit{\textbf{I}}_d)$. We recall that $\pi_K=\frac{1}{d}$ for any $K=1,...,d$.

To obtain the rate of convergence $r_n=dK_0\log(nd)/n$ for probabilistic PCA, we just need to show that the quantities in Assumption \ref{priormass} are upper bounded by $r_n$ (up to a constant) as we have $\log(1/\pi_{K_0})/n$ much smaller than $r_n$:
$$
\int \textrm{KL}\bigg(\mathcal{N}(0,\textit{\textbf{W}}_0\textit{\textbf{W}}_0^T + \sigma^2 \textit{\textbf{I}}_d),\mathcal{N}(0,\textit{\textbf{W}}\textit{\textbf{W}}^T + \sigma^2 \textit{\textbf{I}}_d)\bigg) \rho_{K_0}(d\theta_K) \hspace{0.3cm} , \hspace{0.3cm} \frac{\textrm{KL}(\rho_{K_0},\Pi_{K_0})}{n}.
$$

We have two terms. The first one, i.e. the Kullback-Leibler term, provides a rate of convergence of $dK_0\log(dn)/n$ as:
\begin{align*}
    \textrm{KL}(\rho_{K_0},\Pi_{K_0}) & = \sum_{j=1}^{K_0} \textrm{KL}\bigg(\mathcal{N}(0,\frac{1}{dn^2} \textit{\textbf{I}}_d),\mathcal{N}(0,s^2 \textit{\textbf{I}}_d)\bigg) \\
    & = \frac{K_0}{2} \bigg( \frac{1}{n^2s^2} -d +d\log(s^2) + d\log(dn^2) \bigg) \\
    & \leq \frac{K_0}{2n^2s^2} - \frac{dK_0}{2} + \frac{dK_0\log(s^2)}{2} + dK_0\log(dn).
\end{align*}

The integral is much more complicated to deal with. We will show that it leads to a rate faster than $dK_0\log(dn)/n$. If we denote $\mathbb{E}$ the expectation with respect to $\rho_{K_0}$, then the integral will be equal to:
$$
\frac{1}{2} \mathbb{E}\bigg[\text{Tr}\bigg((\textit{\textbf{W}}\textit{\textbf{W}}^T+\sigma^2\textit{\textbf{I}}_d)^{-1}(\textit{\textbf{W}}_0\textit{\textbf{W}}_0^T+\sigma^2\textit{\textbf{I}}_d)\bigg)\bigg] - \frac{d}{2} + \frac{1}{2} \mathbb{E}\bigg[\log\bigg(\frac{\det(\textit{\textbf{W}}\textit{\textbf{W}}^T+\sigma^2\textit{\textbf{I}}_d)}{\det(\textit{\textbf{W}}_0\textit{\textbf{W}}_0^T+\sigma^2\textit{\textbf{I}}_d)}\bigg)\bigg].
$$

The expectation of the log-ratio is easy to upper bound. We denote $\lambda_1,...,\lambda_d$ the positive eigenvalues of the positive definite matrix $\textit{\textbf{W}}_0\textit{\textbf{W}}_0^T+\sigma^2\textit{\textbf{I}}_d$. Then for each $j=1,...,d$, $\lambda_j \geq \sigma^2$ and using Jensen's inequality and the log-concavity of the determinant:
\begin{align*}
    \mathbb{E}\bigg[\log\bigg(\det(\textit{\textbf{W}}\textit{\textbf{W}}^T+\sigma^2\textit{\textbf{I}}_d)\bigg)\bigg] & \leq \log\bigg(\det\big(\mathbb{E}[\textit{\textbf{W}}\textit{\textbf{W}}^T]+\sigma^2\textit{\textbf{I}}_d\big)\bigg) \\
    & = \log\bigg(\det\big(\textit{\textbf{W}}_0\textit{\textbf{W}}_0^T+\sigma^2\textit{\textbf{I}}_d+\frac{1}{dn^2}\textit{\textbf{I}}_d\big)\bigg) \\
    & = \sum_{j=1}^d \log\bigg(\lambda_j+\frac{1}{dn^2}\bigg) \\
    & = \sum_{j=1}^d \log(\lambda_j) + \sum_{j=1}^d \log\bigg(1+\frac{1}{\lambda_jdn^2}\bigg) \\
    & = \mathbb{E}\bigg[\log\bigg(\det(\textit{\textbf{W}}_0\textit{\textbf{W}}_0^T+\sigma^2\textit{\textbf{I}}_d)\bigg)\bigg] + \sum_{j=1}^d \log\bigg(1+\frac{1}{\lambda_jdn^2}\bigg) \\
    & \leq \mathbb{E}\bigg[\log\bigg(\det(\textit{\textbf{W}}_0\textit{\textbf{W}}_0^T+\sigma^2\textit{\textbf{I}}_d)\bigg)\bigg] + \sum_{j=1}^d \frac{1}{\lambda_jdn^2} \\
    & \leq \mathbb{E}\bigg[\log\bigg(\det(\textit{\textbf{W}}_0\textit{\textbf{W}}_0^T+\sigma^2\textit{\textbf{I}}_d)\bigg)\bigg] + \frac{1}{n^2 \sigma^2}
\end{align*}
and then the expectation of the log-ratio provides a rate of convergence of $1/n^2$:
$$
\mathbb{E}\bigg[\log\bigg(\frac{\det(\textit{\textbf{W}}\textit{\textbf{W}}^T+\sigma^2\textit{\textbf{I}}_d)}{\det(\textit{\textbf{W}}_0\textit{\textbf{W}}_0^T+\sigma^2\textit{\textbf{I}}_d)}\bigg)\bigg] \leq \frac{1}{n^2 \sigma^2}.
$$

The remainder can be bounded as follows:

\begin{align*}
    \mathbb{E}\bigg[\text{Tr}\bigg((\textit{\textbf{W}}\textit{\textbf{W}}^T+\sigma^2\textit{\textbf{I}}_d)^{-1}(\textit{\textbf{W}}_0 & \textit{\textbf{W}}_0^T+\sigma^2\textit{\textbf{I}}_d)\bigg)\bigg] - d  \\
    & = \mathbb{E}\bigg[\text{Tr}\bigg((\textit{\textbf{W}}\textit{\textbf{W}}^T+\sigma^2\textit{\textbf{I}}_d)^{-1}(\textit{\textbf{W}}_0\textit{\textbf{W}}_0^T-\textit{\textbf{W}}\textit{\textbf{W}}^T)\bigg)\bigg] \\
    & \leq \mathbb{E}\bigg[ \| (\textit{\textbf{W}}\textit{\textbf{W}}^T+\sigma^2\textit{\textbf{I}}_d)^{-1} \|_F \times  \|\textit{\textbf{W}}_0\textit{\textbf{W}}_0^T-\textit{\textbf{W}}\textit{\textbf{W}}^T\|_F \bigg] \\
    & \leq \sqrt{d} \mathbb{E}\bigg[ \| (\textit{\textbf{W}}\textit{\textbf{W}}^T+\sigma^2\textit{\textbf{I}}_d)^{-1} \|_2 \times  \|\textit{\textbf{W}}_0\textit{\textbf{W}}_0^T-\textit{\textbf{W}}\textit{\textbf{W}}^T\|_F \bigg] \\
    & = \sqrt{d} \mathbb{E}\bigg[ \sigma_{\max}\big((\textit{\textbf{W}}_0\textit{\textbf{W}}_0^T+\sigma^2\textit{\textbf{I}}_d)^{-1}\big) \times  \|\textit{\textbf{W}}_0\textit{\textbf{W}}_0^T-\textit{\textbf{W}}\textit{\textbf{W}}^T\|_F \bigg] \\
    & = \sqrt{d} \mathbb{E}\bigg[ \sigma_{\min}(\textit{\textbf{W}}_0\textit{\textbf{W}}_0^T+\sigma^2\textit{\textbf{I}}_d)^{-1} \times  \|\textit{\textbf{W}}_0\textit{\textbf{W}}_0^T-\textit{\textbf{W}}\textit{\textbf{W}}^T\|_F \bigg]\end{align*}
i.e.
\begin{align*}
    \mathbb{E}\bigg[\text{Tr}\bigg((\textit{\textbf{W}}\textit{\textbf{W}}^T+\sigma^2\textit{\textbf{I}}_d)^{-1}(\textit{\textbf{W}}_0 \textit{\textbf{W}}_0^T+\sigma^2\textit{\textbf{I}}_d)\bigg)\bigg] - d  & \leq \sqrt{d} \mathbb{E}\bigg[ (\sigma^2)^{-1} \times  \|\textit{\textbf{W}}_0\textit{\textbf{W}}_0^T-\textit{\textbf{W}}\textit{\textbf{W}}^T\|_F \bigg] \\
    & = \frac{\sqrt{d}}{\sigma^2} \mathbb{E}\bigg[ \|\textit{\textbf{W}}_0\textit{\textbf{W}}_0^T-\textit{\textbf{W}}\textit{\textbf{W}}^T\|_F \bigg]
\end{align*}
where $\|.\|_F$ is the Frobenius norm on matrices, $\|.\|_2$ the spectral norm, and $\sigma_{\min}(\textit{\textbf{A}})$, $\sigma_{\max}(\textit{\textbf{A}})$ the lowest and largest singular values of a matrix $\textit{\textbf{A}}$. We use the fact that for a symmetric semi-definite positive matrix: $\sigma_{\max}(\textit{\textbf{A}}^{-1}) = \big(\sigma_{\min}(\textit{\textbf{A}})\big)^{-1}$ and $\sigma_{\min}(\textit{\textbf{A}}+\sigma^2 \textit{\textbf{I}}_d) \geq \sigma^2$, as well as the inequality $\|\textit{\textbf{A}}\|_F \leq \sqrt{d} \|\textit{\textbf{A}}\|_2$ for any $d \times d$ matrix $\textit{\textbf{A}}$.

The only thing left to do is to upper bound the expectation of the Frobenius norm of $\textit{\textbf{W}}_0\textit{\textbf{W}}_0^T-\textit{\textbf{W}}\textit{\textbf{W}}^T$ by a multiple of $\frac{\sqrt{d}K_0\log(dn)}{n}$. We use the triangle and Cauchy-Schwarz's inequalities:
\begin{align*}
    \mathbb{E}\bigg[ \|\textit{\textbf{W}}_0\textit{\textbf{W}}_0^T-\textit{\textbf{W}}\textit{\textbf{W}}^T\|_F \bigg] & \leq \mathbb{E}\bigg[ \|\textit{\textbf{W}}\textit{\textbf{W}}^T-\textit{\textbf{W}}\textit{\textbf{W}}_0^T\|_F \bigg] + \mathbb{E}\bigg[ \|\textit{\textbf{W}}\textit{\textbf{W}}_0^T-\textit{\textbf{W}}_0\textit{\textbf{W}}_0^T\|_F \bigg] \\
    & \leq \mathbb{E}\bigg[ \|\textit{\textbf{W}}(\textit{\textbf{W}}-\textit{\textbf{W}}_0)^T\|_F \bigg] + \mathbb{E}\bigg[ \|(\textit{\textbf{W}}-\textit{\textbf{W}}_0)\textit{\textbf{W}}_0^T\|_F \bigg] \\
    & \leq \mathbb{E}\bigg[ \|\textit{\textbf{W}}\|_F \|\textit{\textbf{W}}-\textit{\textbf{W}}_0\|_F \bigg] + \mathbb{E}\bigg[ \|\textit{\textbf{W}}-\textit{\textbf{W}}_0\|_F \|\textit{\textbf{W}}_0\|_F \bigg] \\ 
    & \leq \sqrt{ \mathbb{E}\big[ \|\textit{\textbf{W}}\|_F^2\big] \mathbb{E}\big[\|\textit{\textbf{W}}-\textit{\textbf{W}}_0\|_F^2 \big] } + \sqrt{ \mathbb{E}\big[ \|\textit{\textbf{W}}-\textit{\textbf{W}}_0\|_F^2\big] \mathbb{E}\big[\|\textit{\textbf{W}}_0\|_F^2 \big] } \\ 
    & \leq \sqrt{ \mathbb{E}\big[ \|\textit{\textbf{W}}\|_F^2\big] \mathbb{E}\big[\|\textit{\textbf{W}}-\textit{\textbf{W}}_0\|_F^2 \big] } + \|\textit{\textbf{W}}_0\|_F \sqrt{ \mathbb{E}\big[ \|\textit{\textbf{W}}-\textit{\textbf{W}}_0\|_F^2\big] }.
\end{align*}

We can upper bound $\|\textit{\textbf{W}}_0\|_F=\sqrt{\sum_{i=1}^d \sum_{j=1}^{K_0} (\textit{\textbf{W}}_0)_{i,j}^2}$ by $\sqrt{dK_0}C$ where $C$ is an upper bound on each of the coefficients of matrix $\textit{\textbf{W}}_0$.

Also, we can notice that $dn^2 \|\textit{\textbf{W}}-\textit{\textbf{W}}_0\|_F^2 = \sum_{i=1}^d \sum_{j=1}^{K_0} \big(\sqrt{d}n(\textit{\textbf{W}}_{i,j}-(\textit{\textbf{W}}_0)_{i,j})\big)^2$ is a sum of squares of independent standard normal random variables. Thus $dn^2 \|\textit{\textbf{W}}-\textit{\textbf{W}}_0\|_F^2 $ follows a chi-squared distribution with $dK_0$ degrees of freedom and its expectation is equal to $dK_0$. Hence:
$$
\mathbb{E}\big[\|\textit{\textbf{W}}-\textit{\textbf{W}}_0\|_F^2 \big]=\frac{K_0}{n^2}.
$$

Similarly, as $\textit{\textbf{W}}_{i,j}-(\textit{\textbf{W}}_0)_{i,j}$ is centered, we get:
\begin{align*}
    \mathbb{E}\big[ \|\textit{\textbf{W}}\|_F^2\big] & = \mathbb{E} \bigg[\sum_{i=1}^d \sum_{j=1}^{K_0} \textit{\textbf{W}}_{i,j}^2 \bigg] \\
    & = \sum_{i=1}^d \sum_{j=1}^{K_0} \mathbb{E} \bigg[ \big(\textit{\textbf{W}}_{i,j}-(\textit{\textbf{W}}_0)_{i,j}\big)^2 + (\textit{\textbf{W}}_0)_{i,j}^2 - 2 (\textit{\textbf{W}}_0)_{i,j} \big(\textit{\textbf{W}}_{i,j}-(\textit{\textbf{W}}_0)_{i,j}\big) \bigg] \\
    & = \mathbb{E}\big[\|\textit{\textbf{W}}-\textit{\textbf{W}}_0\|_F^2 \big] + \|\textit{\textbf{W}}_0\|_F^2 \\
    & \leq \frac{K_0}{n^2} + dK_0C^2 \\
    & = \bigg(dC^2+\frac{1}{n^2}\bigg)K_0.
\end{align*}

Thus, we obtain:
\begin{align*}
    \mathbb{E}\bigg[ \|\textit{\textbf{W}}_0\textit{\textbf{W}}_0^T-\textit{\textbf{W}}\textit{\textbf{W}}^T\|_F \bigg] & \leq \frac{\sqrt{K_0}}{n} \sqrt{K_0}\sqrt{dC^2+\frac{1}{n^2}} + \sqrt{dK_0}C \frac{\sqrt{K_0}}{n} \\
    & = \frac{K_0}{n}\sqrt{dC^2+\frac{1}{n^2}} + \frac{\sqrt{d}K_0C}{n} \\
    & \leq \frac{K_0}{n}\bigg(\sqrt{d}C+\frac{1}{n}\bigg) + \frac{\sqrt{d}K_0C}{n} \\
    & = \frac{K_0}{n}\bigg(2\sqrt{d} C+\frac{1}{n}\bigg).
\end{align*}

Hence, the order of the upper bound of the expectation of the Fobrenius norm of matrix $\textit{\textbf{W}}_0\textit{\textbf{W}}_0^T-\textit{\textbf{W}}\textit{\textbf{W}}^T$ is $\frac{\sqrt{d}K_0}{n}<\frac{\sqrt{d}K_0\log(dn)}{n}$.

Finally, the consistency rate associated with the integral term is $\frac{dK_0}{n}$, and the overall rate of convergence is $\frac{dK_0\log(dn)}{n}$.
\end{proof}

\section{Computation of the ELBO for probabilistic PCA.}\label{apd:ELBOcomp}

We consider the framework of probabilistic PCA detailed in Section \ref{sec:example}. Given rank $K$, we place independent Gaussian priors on the columns $W_1,...,W_K$ of $\textit{\textbf{W}}$ such that $\Pi_K=\mathcal{N}(0,s^2\textit{\textbf{I}}_d)^{\otimes K}$, and Gaussian independent variational approximations $\mathcal{N}(\mu_j,{\bold{\Sigma}}_j)$ for the columns of $\textit{\textbf{W}}$. The ELBO associated with rank $K$ and variational approximation $\rho_K=\otimes_{j=1}^{K} \mathcal{N}(\mu_j,{\bold{\Sigma}}_j)$ is given by:
\[
\textrm{ELBO}_K(\rho_K) = \alpha \int \ell_n(\textit{\textbf{W}}) \rho_K(d\textit{\textbf{W}}) - \textrm{KL}\big(\rho_K,\Pi_K\big).
\]

The Kullback-Leibler term $\textrm{KL}\big(\rho_K,\Pi_K\big)$ is equal to:
$$
\frac{1}{2} \sum_{j=1}^K \bigg\{ \frac{\text{Tr}({\bold{\Sigma}}_j)}{s^2} + \frac{\mu_j^T\mu_j}{s^2} - \log\big(\det({\bold{\Sigma}}_j)\big) \bigg\} - \frac{dK}{2} + \frac{dK\log(s^2)}{2}
$$
while the average log-likelihood $\int \ell_n(\textit{\textbf{W}}) \rho_K(d\textit{\textbf{W}})$ is:
$$
-\frac{dn}{2} \log(2\pi) - \frac{n}{2} \int \log\big(\det(\textit{\textbf{W}}\textit{\textbf{W}}^T+\sigma^2\textit{\textbf{I}}_d)\big) \rho_K(dW) - \frac{1}{2} \sum_{i=1}^{n} \int X_i^T (\textit{\textbf{W}}\textit{\textbf{W}}^T+\sigma^2\textit{\textbf{I}}_d)^{-1} X_i \hspace{0.1cm} \rho_K(d\textit{\textbf{W}})
$$
where both integrals can be computed thanks to Monte-Carlo sampling approximations:
$$
\int \log\big(\det(\textit{\textbf{W}}\textit{\textbf{W}}^T+\sigma^2\textit{\textbf{I}}_d)\big) \rho_K(d\textit{\textbf{W}}) \approx \sum_{\ell=1}^N \log\big(\det(\textit{\textbf{W}}^{(\ell)} \textit{\textbf{W}}^{(\ell) T}+\sigma^2\textit{\textbf{I}}_d)\big)
$$
and
$$
\int X_i^T (\textit{\textbf{W}}\textit{\textbf{W}}^T+\sigma^2\textit{\textbf{I}}_d)^{-1} X_i \hspace{0.1cm} \rho_K(d\textit{\textbf{W}}) \approx \sum_{\ell=1}^N X_i^T (\textit{\textbf{W}}^{(\ell)} \textit{\textbf{W}}^{(\ell) T}+\sigma^2\textit{\textbf{I}}_d)^{-1} X_i
$$
where $\textit{\textbf{W}}^{(1)},...,\textit{\textbf{W}}^{(N)}$ are $N$ i.i.d. data sampled from $\rho_K$.

The inverse matrix $(\textit{\textbf{W}}\textit{\textbf{W}}^T+\sigma^2\textit{\textbf{I}}_d)^{-1}$ can be derived thanks to classical inversion algorithms. For instance, it is possible to do so in $\mathcal{O} (Kd^2) $ operations instead of the classical $\mathcal{O}(d^3)$ inversion procedure thanks to Sherman-Morrison formula: for any matrix $\textit{\textbf{A}} \in \mathbb{R}^{d\times d}$ and vectors $u,v \in \mathbb{R}^d$ such that $\textit{\textbf{A}}+uv^T$ is invertible,
$$
(\textit{\textbf{A}}+uv^T)^{-1} = \textit{\textbf{A}}^{-1} - \frac{\textit{\textbf{A}}^{-1}uv^T\textit{\textbf{A}}^{-1}}{1+v^T\textit{\textbf{A}}^{-1}u}.
$$
We write
$$ 
\textit{\textbf{W}}\textit{\textbf{W}}^T+\sigma^2\textit{\textbf{I}}_d=\sigma^2 \textit{\textbf{I}}_d + \sum_{j=1}^K W_jW_j^T = \bigg( \sigma^2 \textit{\textbf{I}}_d + \sum_{j=1}^{K-1} W_jW_j^T \bigg) + W_K W_K^T
$$ 
and iterate $K$ times Sherman-Morrison formula. The first time, we apply it to $\textit{\textbf{A}}=\sigma^2 \textit{\textbf{I}}_d + \sum_{j=1}^{K-1} W_jW_j^T$ and $u=v=\textit{\textbf{W}}_K$, then to $\textit{\textbf{A}}=\sigma^2 \textit{\textbf{I}}_d + \sum_{j=1}^{K-2} W_jW_j^T$ and $u=v=W_{K-1}$, and so on. We finally obtain $(\textit{\textbf{W}}\textit{\textbf{W}}^T+\sigma^2\textit{\textbf{I}}_d)^{-1}=\textit{\textbf{M}}_K$ where:

$$
\left\{
\begin{array}{l}
  \textit{\textbf{M}}_0 = \sigma^2 \textit{\textbf{I}}_d \\
  \forall j=1,...,K, \hspace{0.2cm}  \textit{\textbf{M}}_j = \textit{\textbf{M}}_{j-1} - \frac{1}{1+W_j^TZ_j} Z_jZ_j^T \text{  with  } Z_j=\textit{\textbf{M}}_{j-1}W_j.
\end{array}
\right.
$$

In order to compute the maximum value $\textrm{ELBO}(K)$ of the ELBO associated with rank $K$, one can use a stochastic gradient descent on $(\mu_1,{\bold{\Sigma}}_1,...,\mu_K,{\bold{\Sigma}}_K)$ that will converge to a local maximum and will give the variational estimator for rank $K$. Then, maximizing $\textrm{ELBO}(K)$ over desired values of $K$ leads to the optimal number of principal components and to the associated optimal variational approximation.

\section{Results in matrix norm for probabilistic PCA.}\label{apd:clip}

To prove Corollaries \ref{corPCAclip} and \ref{corPCAclipfreq}, we need the two lemmas presented behind. We introduce some notations first. We refer the interested reader to \cite{MatrixBook} for more details.

\textbf{Notations :} Let us call  $\mathcal{S}_d^+$ the set of $d\times d$ symmetric positive semi-definite matrices, and $\mathcal{X}_M=\big\{ \textit{\textbf{A}} \in  \mathcal{S}_d^+ /  \|\textit{\textbf{A}}\|_2 \leq M \big\}$. We define the vectorization of Matrix $\textit{\textbf{A}} \in \mathbb{R}^{p \times q}$ with columns $X_1,...,X_q$:
$$
\textnormal{Vec}(\textit{\textbf{A}}) = (\textit{\textbf{A}}_1^T,...,\textit{\textbf{A}}_q^T)^T \in \mathbb{R}^{p \times q}.
$$
We define the Frobenius inner product of two matrices $\textit{\textbf{A}} \in \mathbb{R}^{p\times q}$ and $\tilde{\textit{\textbf{A}}} \in \mathbb{R}^{p\times q}$, that is the sum of componentwise products:
$$
\textit{\textbf{A}} \cdot \tilde{\textit{\textbf{A}}} = \textnormal{Vec}(\textit{\textbf{A}})^T \textnormal{Vec}(\tilde{\textit{\textbf{A}}}).
$$
Notice that $\|\textit{\textbf{A}}\|_F^2 = \textit{\textbf{A}} \cdot \textit{\textbf{A}} = \textnormal{Vec}(\textit{\textbf{A}})^T \textnormal{Vec}(\textit{\textbf{A}})$.

We also introduce the Kronecker and Box products of two matrices $\textit{\textbf{A}} \in \mathbb{R}^{p_1\times q_1}$ and $\tilde{\textit{\textbf{A}}} \in \mathbb{R}^{p_2\times q_2}$ which are respectively the matrices $\textit{\textbf{A}} \otimes \tilde{\textit{\textbf{A}}} \in \mathbb{R}^{p_1p_2\times q_1q_2}$ and $\textit{\textbf{A}} \boxtimes \tilde{\textit{\textbf{A}}} \in \mathbb{R}^{p_1p_2\times q_1q_2}$ such that their coefficients are defined as:
$$
( \textit{\textbf{A}} \otimes \tilde{\textit{\textbf{A}}} )_{p_2(i-1)+j,q_2(k-1)+l} = \textit{\textbf{A}}_{i,k} \tilde{\textit{\textbf{A}}}_{j,l},
$$
\vspace{-0.7cm}
$$
( \textit{\textbf{A}} \boxtimes \tilde{\textit{\textbf{A}}} )_{p_2(i-1)+j,q_1(k-1)+l} = \textit{\textbf{A}}_{i,l} \tilde{\textit{\textbf{A}}}_{j,k}
$$
for any integers $i,j,k,l$ such that $1 \leq i \leq p_1$, $1 \leq j \leq q_1$, $1 \leq k \leq p_2$, $1 \leq l \leq q_2$.

We have the following properties for any matrix $\textit{\textbf{P}}$:
$$
( \textit{\textbf{A}} \otimes \tilde{\textit{\textbf{A}}} ) \textnormal{Vec}(\textit{\textbf{P}}) = \textnormal{Vec}(\tilde{\textit{\textbf{A}}} \textit{\textbf{P}} \textit{\textbf{A}}^T),
$$
\vspace{-0.7cm}
$$ 
( \textit{\textbf{A}} \boxtimes \tilde{\textit{\textbf{A}}} ) \textnormal{Vec}(\textit{\textbf{P}}) = \textnormal{Vec}(\tilde{\textit{\textbf{A}}} \textit{\textbf{P}}^T \textit{\textbf{A}}^T).
$$

We also define the gradient $\nabla f(\textit{\textbf{A}}) \in \mathbb{R}^{p\times q}$ and the Hessian $\nabla^2 f(\textit{\textbf{A}}) \in \mathbb{R}^{pq\times pq}$ of a differentiable function $f: \mathbb{R}^{p\times q} \rightarrow \mathbb{R}$ at matrix $\textit{\textbf{A}}$:
$$
( \nabla f(\textit{\textbf{A}}) )_{p_2(i-1)+j,q_2(k-1)+l} = \frac{\partial f(\textit{\textbf{A}})}{\partial\textit{\textbf{A}}_{i,j}},
$$
\vspace{-0.4cm}
$$
( \nabla^2 f(\textit{\textbf{A}}) )_{p_2(j-1)+i,p_2(l-1)+k} = \frac{\partial^2 f(\textit{\textbf{A}})}{\partial\textit{\textbf{A}}_{i,j} \partial\textit{\textbf{A}}_{k,l}}
$$
for any integers $i,j,k,l$ such that $1 \leq i,k \leq p$, $1 \leq j,l \leq q$ where $\partial f$ is the partial derivative of $f$.

We say that a differentiable function $f: \mathbb{R}^{p\times q} \rightarrow \mathbb{R}$ is $s$-strongly convex in $\mathcal{S} \subset \mathbb{R}^{pq\times pq}$ with respect to the norm $\|.\|$ as soon as one of the two following equivalent properties is satisfied:
$$
f(\textit{\textbf{A}}) \geq f(\tilde{\textit{\textbf{A}}}) + \nabla f(\textit{\textbf{A}}) \cdot ( \textit{\textbf{A}} - \tilde{\textit{\textbf{A}}}) + \frac{s}{2} \| \textit{\textbf{A}} - \tilde{\textit{\textbf{A}}} \|^2
$$
or
$$
\textnormal{Vec}(\textit{\textbf{P}})^T \nabla^2 f(\textit{\textbf{A}}) \textnormal{Vec}(\textit{\textbf{P}}) \geq s \| \textit{\textbf{P}} \|^2
$$
for any matrix $\textit{\textbf{A}}, \tilde{\textit{\textbf{A}}} \in \mathcal{S}$ and any symmetric matrix $\textit{\textbf{P}} \in \mathbb{R}^{pq\times pq}$.

\begin{lemma}
\label{corCvx}
Then, function $f:\textit{\textbf{A}} \rightarrow -\log\big(\det(\textit{\textbf{A}}+M \textit{\textbf{I}}_d)\big)$ is $1/(M+\sigma^2)^{2}$ strongly convex in $\mathcal{X}_M$ with respect to the Frobenius norm.
\end{lemma}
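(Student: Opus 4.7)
The plan is to use the second-order characterization of strong convexity supplied just before the statement: I need to show that for every $\textit{\textbf{A}} \in \mathcal{X}_M$ and every symmetric $\textit{\textbf{P}} \in \mathbb{R}^{d\times d}$,
$$
\textnormal{Vec}(\textit{\textbf{P}})^T \nabla^2 f(\textit{\textbf{A}})\, \textnormal{Vec}(\textit{\textbf{P}}) \;\geq\; \frac{1}{(M+\sigma^2)^2}\,\|\textit{\textbf{P}}\|_F^2 .
$$
Writing $\textit{\textbf{B}}(\textit{\textbf{A}}) = \textit{\textbf{A}} + \sigma^2\textit{\textbf{I}}_d$, I first recall the standard matrix-calculus identity (e.g.\ \cite{MatrixBook}) $\nabla f(\textit{\textbf{A}}) = -\textit{\textbf{B}}(\textit{\textbf{A}})^{-1}$, and then the second variation
$$
\textnormal{Vec}(\textit{\textbf{P}})^T \nabla^2 f(\textit{\textbf{A}})\, \textnormal{Vec}(\textit{\textbf{P}}) \;=\; \mathrm{tr}\!\left( \textit{\textbf{B}}(\textit{\textbf{A}})^{-1} \textit{\textbf{P}}\, \textit{\textbf{B}}(\textit{\textbf{A}})^{-1} \textit{\textbf{P}} \right),
$$
which can be obtained either from Jacobi's formula or directly from a second-order Taylor expansion of $t\mapsto -\log\det\bigl(\textit{\textbf{B}}(\textit{\textbf{A}}) + t\textit{\textbf{P}}\bigr)$ at $t=0$.

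Next, I would rewrite this trace in a manifestly nonnegative form. Setting $\textit{\textbf{C}} = \textit{\textbf{B}}(\textit{\textbf{A}})^{-1/2}$ (well-defined since $\textit{\textbf{B}}(\textit{\textbf{A}}) \succ 0$), the trace becomes $\mathrm{tr}(\textit{\textbf{C}}\textit{\textbf{P}}\textit{\textbf{C}}^2\textit{\textbf{P}}\textit{\textbf{C}}) = \|\textit{\textbf{C}}\textit{\textbf{P}}\textit{\textbf{C}}\|_F^2$. The assumption $\textit{\textbf{A}} \in \mathcal{X}_M$ gives $0 \preceq \textit{\textbf{A}} \preceq M\textit{\textbf{I}}_d$, hence $\textit{\textbf{B}}(\textit{\textbf{A}}) \preceq (M+\sigma^2)\textit{\textbf{I}}_d$, so $\textit{\textbf{C}}^2 \succeq (M+\sigma^2)^{-1}\textit{\textbf{I}}_d$, i.e.\ the smallest eigenvalue of $\textit{\textbf{C}}$ is at least $c := (M+\sigma^2)^{-1/2}$.

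The final step is a one-line PSD estimate. For any symmetric positive-definite matrix $\textit{\textbf{C}}$ with $\sigma_{\min}(\textit{\textbf{C}})\geq c$ and any matrix $\textit{\textbf{Q}}$,
$$
\|\textit{\textbf{C}}\textit{\textbf{Q}}\|_F^2 \;=\; \mathrm{tr}(\textit{\textbf{Q}}^T \textit{\textbf{C}}^2 \textit{\textbf{Q}}) \;\geq\; c^2\,\|\textit{\textbf{Q}}\|_F^2 ,
$$
and, by transposition, the same bound holds for $\|\textit{\textbf{Q}}\textit{\textbf{C}}\|_F$. Applying this twice (first with $\textit{\textbf{Q}} = \textit{\textbf{P}}\textit{\textbf{C}}$, then to $\textit{\textbf{P}}$) yields $\|\textit{\textbf{C}}\textit{\textbf{P}}\textit{\textbf{C}}\|_F \geq c^2 \|\textit{\textbf{P}}\|_F$, which plugged into the Hessian formula gives exactly the required lower bound $(M+\sigma^2)^{-2}\|\textit{\textbf{P}}\|_F^2$.

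The main obstacle is bookkeeping rather than conceptual: matching the paper's chosen $\textnormal{Vec}$/$\otimes$/$\boxtimes$ conventions when writing $\nabla^2 f(\textit{\textbf{A}})$, since depending on the convention the Hessian is either $\textit{\textbf{B}}^{-1}\otimes \textit{\textbf{B}}^{-1}$ or $\textit{\textbf{B}}^{-1}\boxtimes \textit{\textbf{B}}^{-1}$; however, for \emph{symmetric} perturbations both conventions produce the same quadratic form $\mathrm{tr}(\textit{\textbf{B}}^{-1}\textit{\textbf{P}}\textit{\textbf{B}}^{-1}\textit{\textbf{P}})$, so the concluding spectral estimate is unaffected by this choice.
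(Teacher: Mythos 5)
Your proof is correct and follows essentially the same route as the paper's: both verify the second-order strong-convexity criterion by lower-bounding the Hessian quadratic form $\mathrm{tr}\big(\textit{\textbf{B}}^{-1}\textit{\textbf{P}}\textit{\textbf{B}}^{-1}\textit{\textbf{P}}\big)$ through the smallest eigenvalue of $\textit{\textbf{B}}^{-1}$, the only cosmetic difference being that the paper invokes the eigenvalues of the Kronecker product $\textit{\textbf{B}}^{-1}\otimes\textit{\textbf{B}}^{-1}$ while you factor through the symmetric square root $\textit{\textbf{B}}^{-1/2}$ and a Frobenius-norm estimate. You also correctly read the statement's $\textit{\textbf{A}}+M\textit{\textbf{I}}_d$ as the intended $\textit{\textbf{A}}+\sigma^2\textit{\textbf{I}}_d$ (a typo the paper's own proof carries as well), which is what makes the constant $1/(M+\sigma^2)^{2}$ come out right.
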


\begin{proof}
The proof follows the same steps than the proof of Theorem 3.1 in \cite{OnlineLogDet}.

The Hessian of function $f$ at any symmetric matrix in $\textit{\textbf{A}} \in \mathcal{X}_M$ is given by (see \cite{MatrixBook}):
$$
\nabla^2 f(\textit{\textbf{A}}) = \bigg((\textit{\textbf{A}}+M \textit{\textbf{I}}_d)^{-1}\bigg)^T \boxtimes (\textit{\textbf{A}}+M \textit{\textbf{I}}_d)^{-1} = (\textit{\textbf{A}}+M \textit{\textbf{I}}_d)^{-1} \boxtimes (\textit{\textbf{A}}+M \textit{\textbf{I}}_d)^{-1}.
$$
Then, we have for any $\textit{\textbf{A}} \in \mathcal{X}_M$ and any symmetric matrix $\textit{\textbf{P}} \in \mathbb{R}^{pq\times pq}$:
\begin{align*}
\textnormal{Vec}(\textit{\textbf{P}})^T \nabla^2 f(\textit{\textbf{A}}) \textnormal{Vec}(\textit{\textbf{P}})
& = \textnormal{Vec}(\textit{\textbf{P}})^T \bigg( (\textit{\textbf{A}}+M \textit{\textbf{I}}_d)^{-1} \boxtimes (\textit{\textbf{A}} +M \textit{\textbf{I}}_d)^{-1} \bigg) \textnormal{Vec}(\textit{\textbf{P}}) \\
& = \textnormal{Vec}(\textit{\textbf{P}})^T \textnormal{Vec}\bigg((\textit{\textbf{A}}+M \textit{\textbf{I}}_d)^{-1} \textit{\textbf{P}}^T (\textit{\textbf{A}}+M \textit{\textbf{I}}_d)^{-1}\bigg) \\
& = \textnormal{Vec}(\textit{\textbf{P}})^T \textnormal{Vec}\bigg((\textit{\textbf{A}}+M \textit{\textbf{I}}_d)^{-1} \textit{\textbf{P}} (\textit{\textbf{A}}+M \textit{\textbf{I}}_d)^{-1}\bigg) \\
& = \textnormal{Vec}(\textit{\textbf{P}})^T \bigg( (\textit{\textbf{A}}+M \textit{\textbf{I}}_d)^{-1} \otimes (\textit{\textbf{A}}+M \textit{\textbf{I}}_d)^{-1} \bigg) \textnormal{Vec}(\textit{\textbf{P}}).
\end{align*}

Note that the eigenvalues of a Kronecker product $\textit{\textbf{A}} \otimes \textit{\textbf{P}}$ are the products of an eigenvalue of $\textit{\textbf{A}}$ and an eigenvalue of $\textit{\textbf{P}}$, and the eigenvalues of $\textit{\textbf{P}}^{-1}$ are the inverse of the eigenvalues of $\textit{\textbf{P}}$. Moreover, the maximum eigenvalue of $\textit{\textbf{A}}+M \textit{\textbf{I}}_d$ is $\|\textit{\textbf{A}}\|_2+\sigma^2$,  so the minimum eigenvalue of $(\textit{\textbf{A}}+M \textit{\textbf{I}}_d)^{-1} \otimes (\textit{\textbf{A}}+M \textit{\textbf{I}}_d)^{-1}$ is equal to $(\|\textit{\textbf{A}}\|_2+\sigma^2)^{-2}$. Hence, for any matrix $\textit{\textbf{A}} \in \mathcal{X}_M$, we get:
\begin{align*}
\textnormal{Vec}(\textit{\textbf{P}})^T \bigg( (\textit{\textbf{A}}+M \textit{\textbf{I}}_d)^{-1} \otimes (\textit{\textbf{A}}+M \textit{\textbf{I}}_d)^{-1} \bigg) \textnormal{Vec}(\textit{\textbf{P}})^T & \geq  (\|\textit{\textbf{A}}\|_2+\sigma^2)^{-2} \textnormal{Vec}(\textit{\textbf{P}})^T \textnormal{Vec}(\textit{\textbf{P}}) \\
& \geq \frac{1}{(M+\sigma^2)^{2}}  \textnormal{Vec}(\textit{\textbf{P}})^T \textnormal{Vec}(\textit{\textbf{P}}),
\end{align*}
and we conclude using the definition of the strong convexity and $\|\textit{\textbf{P}}\|_F^2 =\textnormal{Vec}(\textit{\textbf{P}})^T \textnormal{Vec}(\textit{\textbf{P}})$.

\end{proof}

\begin{lemma}
\label{corStrongCvx}
For any $\alpha \in (0,1)$ and any matrices $\textit{\textbf{W}} \in \mathbb{R}^{d\times K_1}$ and $\tilde{\textit{\textbf{W}}} \in \mathbb{R}^{d\times K_2}$, as soon as the spectral norms of $\textit{\textbf{W}}\textit{\textbf{W}}^T$ and $\tilde{\textit{\textbf{W}}}\tilde{\textit{\textbf{W}}}^T$ are bounded by a constant $B^2$, then:
\begin{equation*}
 D_{\alpha}( P_{\textit{\textbf{W}}}, P_{\tilde{\textit{\textbf{W}}}} ) \geq \frac{\alpha}{16(B^2+\sigma^2)^2} \big\| \tilde{\textit{\textbf{W}}}\tilde{\textit{\textbf{W}}}^T-\textit{\textbf{W}}\textit{\textbf{W}}^T \big\|_F^2.
\end{equation*}
\end{lemma}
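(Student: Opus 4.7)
The plan is to rewrite the $\alpha$-Renyi divergence between the two zero-mean Gaussians $P_\textit{\textbf{W}}$ and $P_{\tilde{\textit{\textbf{W}}}}$ as a Jensen gap of the convex function $\psi(\Sigma):=-\log\det\Sigma$ evaluated at the covariance matrices themselves, and then to quantify that gap via the strong-convexity constant provided by Lemma \ref{corCvx}. I set $\Sigma_1:=\textit{\textbf{W}}\textit{\textbf{W}}^T+\sigma^2\textit{\textbf{I}}_d$ and $\Sigma_2:=\tilde{\textit{\textbf{W}}}\tilde{\textit{\textbf{W}}}^T+\sigma^2\textit{\textbf{I}}_d$; by hypothesis both $\textit{\textbf{W}}\textit{\textbf{W}}^T$ and $\tilde{\textit{\textbf{W}}}\tilde{\textit{\textbf{W}}}^T$ lie in $\mathcal{X}_{B^2}$.

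First I would carry out the Gaussian integral $\int p_\textit{\textbf{W}}^{\alpha}\,p_{\tilde{\textit{\textbf{W}}}}^{1-\alpha}\,dx$ in closed form to obtain $2(1-\alpha)D_\alpha=\alpha\log\det\Sigma_1+(1-\alpha)\log\det\Sigma_2+\log\det\bigl(\alpha\Sigma_1^{-1}+(1-\alpha)\Sigma_2^{-1}\bigr)$. The key algebraic manipulation is then the factorization $\alpha\Sigma_1^{-1}+(1-\alpha)\Sigma_2^{-1}=\Sigma_1^{-1}\bigl[(1-\alpha)\Sigma_1+\alpha\Sigma_2\bigr]\Sigma_2^{-1}$, whose determinant rewrites the third term as $\log\det\bigl((1-\alpha)\Sigma_1+\alpha\Sigma_2\bigr)-\log\det\Sigma_1-\log\det\Sigma_2$. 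Substituting and collecting terms, the Renyi divergence collapses to
$$2(1-\alpha)\,D_\alpha(P_\textit{\textbf{W}},P_{\tilde{\textit{\textbf{W}}}})\;=\;(1-\alpha)\,\psi(\Sigma_1)\,+\,\alpha\,\psi(\Sigma_2)\,-\,\psi\bigl((1-\alpha)\Sigma_1+\alpha\Sigma_2\bigr),$$
which is exactly the Jensen gap of the convex function $\psi$ at the points $\Sigma_1,\Sigma_2$ with weights $1-\alpha,\alpha$.

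Next I would apply Lemma \ref{corCvx} with $M=B^2$: the map $\textit{\textbf{A}}\mapsto\psi(\textit{\textbf{A}}+\sigma^2\textit{\textbf{I}}_d)$ is $(B^2+\sigma^2)^{-2}$-strongly convex on the convex set $\mathcal{X}_{B^2}$, which contains every convex combination of $\textit{\textbf{W}}\textit{\textbf{W}}^T$ and $\tilde{\textit{\textbf{W}}}\tilde{\textit{\textbf{W}}}^T$. The classical quadratic lower bound on the Jensen gap of an $s$-strongly convex function then gives $(1-\alpha)\psi(\Sigma_1)+\alpha\psi(\Sigma_2)-\psi\bigl((1-\alpha)\Sigma_1+\alpha\Sigma_2\bigr)\geq\frac{\alpha(1-\alpha)}{2(B^2+\sigma^2)^2}\|\Sigma_1-\Sigma_2\|_F^2$. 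Combining with the identity above and using $\Sigma_1-\Sigma_2=\textit{\textbf{W}}\textit{\textbf{W}}^T-\tilde{\textit{\textbf{W}}}\tilde{\textit{\textbf{W}}}^T$ yields $D_\alpha\geq\frac{\alpha}{4(B^2+\sigma^2)^2}\|\tilde{\textit{\textbf{W}}}\tilde{\textit{\textbf{W}}}^T-\textit{\textbf{W}}\textit{\textbf{W}}^T\|_F^2$, which is four times stronger than the bound in the lemma and thus suffices.

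The main difficulty I anticipate is the first-step identity: the raw Gaussian Renyi formula exhibits $\log\det$ of the \emph{harmonic}-type average $\alpha\Sigma_1^{-1}+(1-\alpha)\Sigma_2^{-1}$, and applying Lemma \ref{corCvx} directly in that representation only yields strong-convexity constant $\sigma^4$ for the inverse matrices, since the norms there are bounded by $\sigma^{-2}$ rather than $B^2+\sigma^2$; after converting back through $\Sigma_1^{-1}-\Sigma_2^{-1}=-\Sigma_1^{-1}(\Sigma_1-\Sigma_2)\Sigma_2^{-1}$ this produces a bound of order $\sigma^4/(B^2+\sigma^2)^4\cdot\|\Sigma_1-\Sigma_2\|_F^2$, whose scaling is too weak whenever $B\gtrsim\sigma$. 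Rewriting the log-determinant so that it becomes $-\psi$ evaluated at the \emph{arithmetic} mean of $\Sigma_1$ and $\Sigma_2$ is precisely the move that unlocks the correct $(B^2+\sigma^2)^{-2}$ scaling, and every subsequent step is mechanical.
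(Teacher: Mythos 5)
Your proposal is correct and follows essentially the same route as the paper: both proofs reduce $D_\alpha(P_{\textit{\textbf{W}}},P_{\tilde{\textit{\textbf{W}}}})$ to the Jensen gap of $-\log\det$ at the arithmetic mixture $(1-\alpha)\Sigma_1+\alpha\Sigma_2$ of the covariance matrices and then invoke the strong convexity constant of Lemma \ref{corCvx} on $\mathcal{X}_{B^2}$; the only difference is that you derive the closed-form Gaussian R\'enyi identity from the integral $\int p^\alpha r^{1-\alpha}$ rather than citing it. Your bookkeeping even yields the sharper constant $\alpha/(4(B^2+\sigma^2)^2)$, which of course implies the stated $\alpha/(16(B^2+\sigma^2)^2)$.
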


\begin{proof}

We recall that function $f:\textit{\textbf{A}} \rightarrow -\log\big(\det(\textit{\textbf{A}}+M \textit{\textbf{I}}_d)\big)$ is $1/(M+\sigma^2)^2$ strongly convex in $\mathcal{X}_M$ with respect to the Fobrenius norm according to Lemma \ref{corCvx}. Hence, for any matrices $\textit{\textbf{A}}$ and $\tilde{\textit{\textbf{A}}}$ in $\mathcal{X}_M$, we have:
\begin{multline*}
-\log\big(\det((1-\alpha)\textit{\textbf{A}}+\alpha \tilde{\textit{\textbf{A}}})\big) \leq - (1-\alpha) \log\big(\det(\textit{\textbf{A}})\big) - \alpha \log\big(\det(\tilde{\textit{\textbf{A}}})\big) \\ - \frac{1}{2}\alpha(1-\alpha)\frac{1}{4M^2} \|\tilde{\textit{\textbf{A}}}-\textit{\textbf{A}}\|_F^2.
\end{multline*}
We rearrange terms: 
$$
\log\bigg(\frac{\det\big((1-\alpha)\textit{\textbf{A}}+\alpha \tilde{\textit{\textbf{A}}}\big)}{\det(\textit{\textbf{A}})^{1-\alpha}\det(\tilde{\textit{\textbf{A}}})^{\alpha}}\bigg) \geq \frac{\alpha(1-\alpha)}{8M^2} \|\tilde{\textit{\textbf{A}}}-\textit{\textbf{A}}\|_F^2.
$$
Now, we use the fact that:
$$
 D_{\alpha}\big( \mathcal{N}(0,\textit{\textbf{A}}), \mathcal{N}(0,\tilde{\textit{\textbf{A}}}) \big) = \frac{1}{2(1-\alpha)} \log\bigg(\frac{\det\big((1-\alpha)\textit{\textbf{A}}+\alpha \tilde{\textit{\textbf{A}}}\big)}{\det(\textit{\textbf{A}})^{1-\alpha}\det(\tilde{\textit{\textbf{A}}})^{\alpha}}\bigg)
$$
to get for any matrices $\textit{\textbf{W}} \in \mathbb{R}^{d\times K_1}$ and $\tilde{\textit{\textbf{W}}} \in \mathbb{R}^{d\times K_2}$ such that $\|\tilde{\textit{\textbf{W}}}\tilde{\textit{\textbf{W}}}^T+\sigma^2 \textit{\textbf{I}}_d\|_2\leq M$ and $\|\textit{\textbf{W}}\textit{\textbf{W}}^T+\sigma^2 \textit{\textbf{I}}_d\|_F\leq M$:
$$
 D_{\alpha}\big( P_{\textit{\textbf{W}}},P_{\tilde{\textit{\textbf{W}}}} \big) \geq \frac{\alpha}{16M^2} \|\tilde{\textit{\textbf{W}}}\tilde{\textit{\textbf{W}}}^T+\sigma^2 \textit{\textbf{I}}_d-\textit{\textbf{W}}\textit{\textbf{W}}^T-\sigma^2 \textit{\textbf{I}}_d\|_F^2.
$$
Moreover, for any matrix $\textit{\textbf{W}} \in \mathbb{R}^{d\times K}$ such that the spectral norm of $\textit{\textbf{W}}\textit{\textbf{W}}^T$ is bounded by $B^2$, we have $\|\textit{\textbf{W}}\textit{\textbf{W}}^T+\sigma^2 \textit{\textbf{I}}_d\|_2\leq B^2+\sigma^2$. We conclude using the previous inequality for $M=B^2+\sigma^2$.

\end{proof}

Now, let us go back to the proof of Corollary \ref{corPCAclip}.

\begin{proof}

We assume that there exists a true model $\mathcal{M}_{K_0}$ such that $P^0=P_{\textit{\textbf{W}}_0}$ with $\textit{\textbf{W}}_0 \in \mathbb{R}^{d\times K_0}$ and such that the spectral norm of $\textit{\textbf{W}}_0$ is bounded by $B$ (hence the coefficients of $\textit{\textbf{W}}_0$ are also bounded). As $\textrm{clip}_B$ is a projection onto a closed convex set with respect to the Frobenius norm, we have for any matrix $\textit{\textbf{W}} \in \mathbb{R}^{d\times \hat{K}}$:
$$
\big\| \textrm{clip}_B(\textit{\textbf{W}}\textit{\textbf{W}}^T) - \textrm{clip}_B(\textit{\textbf{W}}_0\textit{\textbf{W}}_0^T) \big\|_F \leq \big\| \textit{\textbf{W}}\textit{\textbf{W}}^T - \textit{\textbf{W}}_0 \textit{\textbf{W}}_0^T \big\|_F
$$
and as the coefficients of $\textit{\textbf{W}}_0\textit{\textbf{W}}_0^T$ are bounded by $B^2$:
$$
\big\| \textrm{clip}_B(\textit{\textbf{W}}\textit{\textbf{W}}^T) - \textit{\textbf{W}}_0 \textit{\textbf{W}}_0^T \big\|_F = \big\| \textrm{clip}_B(\textit{\textbf{W}}\textit{\textbf{W}}^T) - \textrm{clip}_B(\textit{\textbf{W}}_0\textit{\textbf{W}}_0^T) \big\|_F.
$$
According to Lemma \ref{corStrongCvx}, we get for any matrix $\textit{\textbf{W}} \in \mathbb{R}^{d\times \hat{K}}$:
$$
\big\| \textrm{clip}_B(\textit{\textbf{W}}\textit{\textbf{W}}^T) - \textit{\textbf{W}}_0 \textit{\textbf{W}}_0^T \big\|_F^2 \leq \frac{16(B^2+\sigma^2)^2}{\alpha} D_{\alpha}\big( P_{\textit{\textbf{W}}_1},P_{\textit{\textbf{W}}_2} \big) .
$$
Thus:
\begin{multline*}
\mathbb{E} \bigg[ \int \big\| \textrm{clip}_B(\textit{\textbf{W}}\textit{\textbf{W}}^T) - \textit{\textbf{W}}_0 \textit{\textbf{W}}_0^T \big\|_F^2 \tilde{\pi}_{n,\alpha}^{\hat{K}}(dW|X_1^n) \bigg] \\ \leq \frac{16(B^2+\sigma^2)^2}{\alpha} \mathbb{E} \bigg[ \int D_{\alpha}( P_{W}, P_{\textit{\textbf{W}}_0} ) \tilde{\pi}_{n,\alpha}^{\hat{K}}(dW|X_1^n) \bigg]
\end{multline*}
and we use Theorem \ref{corPCA}:
\begin{equation*}
\mathbb{E} \bigg[ \int D_{\alpha}( P_{\textit{\textbf{W}}}, P_{\textit{\textbf{W}}_0} ) \tilde{\pi}_{n,\alpha}^{\hat{K}}(dW|X_1^n) \bigg] = \mathcal{O} \bigg( \frac{dK_0 \log(dn) }{n} \bigg).
\end{equation*}
which ends the proof.

\end{proof}

We can obtain Corollary \ref{corPCAclipfreq} using a simple convexity argument.

\end{document}